\newcommand{\Prob}{\mathbb{P}}
\newcommand{\E}{\mathbb{E}}
\newcommand{\C}{\mathbb{C}}
\newcommand{\R}{\mathbb{R}}
\newcommand{\Res}{\mathbf{G}}
\newcommand{\Ker}{\mathcal{K}}
\newcommand{\cC}{\mathcal{C}}
\newcommand{\cL}{\mathcal{L}}
\newcommand{\BA}{\mathbf{A}}
\newcommand{\Id}{\mathbf{I}}
\DeclareMathOperator{\Tr}{Tr}
\DeclareMathOperator{\sgn}{sgn}
\DeclareMathOperator{\diff}{d}
\newtheorem{theorem}{Theorem}
\newtheorem{lemma}[theorem]{Lemma}
\newtheorem{corollary}[theorem]{Corollary}
\theoremstyle{definition}
\newtheorem{definition}[theorem]{Definition}
\newtheorem{remark}[theorem]{Remark}
\title[Covariance of Half-Heavy Wigner Matrix Eigenvalues]{Covariance 
Kernel of Linear Spectral Statistics for Half-Heavy Tailed Wigner 
Matrices}
\author{Asad Lodhia and Anna Maltsev}
\begin{document}

\begin{abstract}
In this paper we analyze the covariance kernel of the Gaussian process 
that arises as the limit of fluctuations of linear spectral statistics 
for Wigner matrices with a few moments. More precisely, the process we 
study here corresponds to Hermitian matrices with independent entries 
that have $\alpha$ moments for $2<\alpha < 4$. We obtain a closed form 
$\alpha$-dependent expression for the covariance of the limiting 
process resulting from fluctuations of the Stieltjes transform by 
explicitly integrating the known double Laplace transform integral 
formula obtained in \cite{bgm16}.  We then express the covariance as an 
integral kernel acting on bounded continuous test functions. The 
resulting formulation allows us to offer a heuristic interpretation of 
the impact the typical large eigenvalues of this matrix ensemble have 
on the covariance structure.
\end{abstract}
\maketitle

\section{Introduction}

The main purpose of this paper is to understand the covariance
of the \emph{linear spectral statistics} for Wigner matrices $\BA_N$
whose entries have cumulative distribution functions decaying like
$x^{-\alpha}$ for $2 < \alpha < 4$ (see Definition \ref{def:matmodel}).
Linear spectral statistics are random variables of the type
\begin{equation*}
X_N(f) := \sum_{j=1}^N f\big(\lambda_j(\BA_N)\big)
\end{equation*}
where  $\{\lambda_j(\BA_N)\}_{j=1}^N$ are the eigenvalues of $\BA_N$,
$f$ is a test function, and $N$ is the dimension of $\BA_N$.

Studying properties of the linear statistic of the eigenvalues allows
us to characterize the limiting behavior of the empirical measure,
\begin{equation*}
L_N(\diff x) = \frac{1}{N}\sum_{j=1}^N \delta_{\lambda_j}(\diff x).
\end{equation*}
The celebrated Wigner semicircle law, the Mar\v{c}enko-Pastur Law and
various other limiting distribution of random matrix eigenvalues are
shown by proving the convergence of $L_N(f) = N^{-1} X_N(f)$ for a
well-chosen class of functions $f$, for instance, the resolvent $f(x) =
(z-x)^{-1}$ for $z \in \C^+$ or polynomial test functions $f(x) =
x^k$ for $k \geq 1$, see \cite{AGZ10} for a detailed introduction.

Once the limiting measure of $L_N$ is understood, a natural way to
proceed is to look at the fluctuations of the linear statistics. This
amounts to studying the quantity
\begin{equation*}
\mathring{X}_N(f):=\sigma_N \Big(X_N(f) - \E\big[X_N(f)\big]\Big),
\end{equation*}
where $f$ is suitably smooth and $\sigma_N$ is an appropriate scaling 
parameter. For many matrix models, choosing the factor $\sigma_N = 1$ 
causes $\mathring{X}_N(f)$ (or even $X_N(f)$ itself) to converge to a 
centered Gaussian random variable with variance functional $\Sigma(f)$.

In addition to establishing the Gaussianity of $\mathring{X}_N(f)$, a 
thorough analysis of its covariance structure is important both as a 
theoretical achievement and as a starting point for the development of 
novel statistical techniques. A deep understanding of such covariance 
structures can establish which sets of test functions will be 
asymptotically independent of others. This information underlies many 
statistical tests arising in applications (see the recent survey 
\cite{YZhB15} for a variety of settings for which Central Limit 
Theorems for linear spectral statistics can be used for theoretical 
statistics.)

\subsection{Central Limit Theorems for Light-Tailed Ensembles}

In the above description, if the matrix $\BA_N$ is a Haar distributed
Unitary matrix (similar results hold for Haar distributed Orthogonal
and Symplectic random matrices), $L_N(\diff z)$ converges to a uniform
measure on the unit circle. The \emph{joint} fluctuations of the vector
\begin{equation*}
\big(X_N(f_1),\ldots,X_N(f_k)\big),
\end{equation*}s
where $f_\ell(w) = w^\ell$ converge to a centered complex Gaussian
random vector whose covariance matrix is
\begin{equation*}
\begin{bmatrix}
1 & 0 & \cdots & \cdots & 0 \\
0 & \sqrt{2} & 0 & \cdots & 0  \\
\vdots & \ddots & \ddots & \ddots & \vdots\\
0 & \cdots & 0 & \sqrt{k-1} & 0 \\
0 & \cdots & \cdots  &0 &\sqrt{k}
\end{bmatrix}
\end{equation*}
which is to say, $X_N$ converges to a random Fourier series whose
coefficients of $f_\ell$ are independent Gaussians with variance $\ell$
\cite[Theorem 2]{ds94}. The proof of \cite{ds94} relied on explicit
formula of the moments in terms of symmetric polynomials due to their
relationship to representation theory of the classical compact groups.

If the matrix $\BA_N$ is a Hermitian random matrix whose density
is of the form
\begin{equation*}
\Prob\big(\{\BA_N \in S\}\big)=\frac{1}{\mathcal{Z}_N}\int_S
\exp\bigg(-\frac{N}{2}\Tr V(\Phi)\bigg) \prod_{1\leq i < j\leq N}
\diff\Re\Phi_{i,j} \diff\Im \Phi_{i,j} \prod_{k=1}^N \diff \Phi_{k,k}
\end{equation*}
for a polynomial potential $V$ with even degree and largest coefficient
positive then $L_N$ converges to an ``equilibrium measure,'' $\mu_V$.
Assuming $\mu_V$  is supported on an interval $[a,b]$, for Chebyshev
polynomials $T_1$, $\ldots$, $T_k$, the multivariate vector
\begin{equation*}
\big( X_N(T_1) - N\mu_V(T_1), \ldots\/, X_N(T_k) - N\mu_V(T_k)\big),
\end{equation*}
converges to a real multivariate Gaussian with covariance matrix
\begin{equation*}
\frac{1}{2}
\begin{bmatrix}
1 & 0 & \cdots & \cdots & 0 \\
0 & \sqrt{2} & 0 & \cdots & 0  \\
\vdots & \ddots & \ddots & \ddots & \vdots\\
0 & \cdots & 0 & \sqrt{k-1} & 0 \\
0 & \cdots & \cdots  &0 &\sqrt{k}
\end{bmatrix},
\end{equation*}
again. We may interpret the above result as saying that $X_N - N\mu_V$ 
converges as a process to a random Fourier series in the basis $T_j$ 
with similar results for real symmetric and symplectic random matrices 
\cite{J98}. The proof of \cite{J98} relied on potential theory 
arguments and integration by parts formulae for the explicit density of 
the eigenvalues.

The above Central Limit Theorems hold true for other point processes
$\{\lambda_j\}_{j=1}^N \subset \R$ even when they do not have an
immediate interpretation as the eigenvalues of some random matrix
model. For example, when $\{\lambda_j\}_{j=1}^N$ are distributed
according to a $\beta$-ensemble
\begin{equation*}
\Prob\big(\{(\lambda_1,\ldots,\lambda_N) \in S\}\big) =
\frac{1}{\mathcal{Z}_{N,\beta}}\int_S \prod_{1 \leq i < j \leq
N}|\lambda_i - \lambda_j|^\beta \exp\bigg(-\frac{\beta N}{2}\sum_{i=1}^N
V(\lambda_i)\bigg),
\end{equation*}
for a wide range of $V$ and $\beta$, the
Central Limit Theorem proved in \cite{J98} still holds.
The interpretation of this point process as a set of random matrix
eigenvalues is lost when $\beta \notin\{1,2,4\}$.

When $\{\lambda_j\}_{j=1}^N\subset \R$ are distributed according to 
biorthogonal ensembles \cite[Theorem 2.5]{BD17} whose joint 
distributions are defined by the formula
\begin{multline*}
\Prob\big(\{(\lambda_1,\ldots, \lambda_N)\in S\}\big)\\ =\int_S
\frac{1}{N!}\det \Big[\psi_{j-1}(\lambda_i)\Big]_{1\leq i,j \leq N}
\det\Big[ \phi_{j-1}(\lambda_i)\Big]_{1\leq i,j \leq N} \nu(\diff
\lambda_1) \cdots \nu(\diff\lambda_N),
\end{multline*}
where $\nu$ is a Borel measure and $\phi_i$ and $\psi_j$ are a family 
of functions on $\R^N$ such that
\begin{equation*}
\int_\R \phi_i(x) \psi_j(x) \diff \nu(x) = \delta_{i,j},
\end{equation*}
then under certain assumptions on $\phi_i$ and $\psi_j$, there exists a
measure $\mu$ supported on a single interval $[a,b]$ such that
\begin{equation*}
L_N(\diff x) \to \mu
\end{equation*}
weakly almost surely. Further for any $f \in C^1(\R)$ it has been
shown that the following weak convergence holds
\begin{equation*}
X_N(f) - N\mu(f)\Longrightarrow \mathcal{N}\bigg(0,\sum_{k=1}^\infty k
|\hat f_k|^2\bigg),
\end{equation*}
where $\hat{f}_k$ are explicit.

In all of the above cases, when the limit shape of our point process 
$L_N$ is not ``multi-cut'' (supported on several disjoint non-empty 
intervals), the fluctuations of $X_N$ is a random Fourier series whose 
$k$-th coefficient is a Gaussian times $\sqrt{k}$ (up to some 
model-dependent scaling and centering). Non-Gaussian behavior has 
arisen for the $\beta$-ensemble when the limiting measure $\mu_V$ is 
multi-cut \cite{Sc13}, therefore while Gaussianity describes 
fluctuations of linear spectral statistics for some of the most common 
random matrix ensembles, this characterization of the fluctuations of 
linear spectral statistics is not all-encompassing.

\subsection{Heavy Tailed Matrices}

The proof techniques used in the above results for classical compact 
group random matrices, light-tailed matrices, general $\beta$-ensembles 
and biorthogonal ensembles no longer apply for heavy-tailed random 
matrix eigenvalues. When the entries $a_{i,j}$ of $\mathbf{A}_N$ are of 
the form $N^{-\frac{1}{\alpha}}x_{i,j}$ with $0 < \alpha < 2$ and 
$x_{i,j}$ in the domain of attraction of an $\alpha$-stable law, the 
limit of $L_N$ is now a measure $\mu_\alpha$ which satisfies a coupled 
fixed point equation given in \cite[Theorem 1.4]{bag08}. Later, 
\cite{bcc11} established another characterization of $\mu_\alpha$ in 
terms of the Poisson Weighted Infinite Tree which provided more 
information about properties of $\mu_\alpha$, for example, its absolute 
continuity for $1 < \alpha < 2$ \cite[Theorem 1.6]{bcc11}. The largest 
eigenvalues of these matrices were proven to converge to Poisson point 
processes under suitable scaling conditions \cite{So04}. These results 
were later extend for tail decay in the region $2 < \alpha < 4$ 
\cite{abp09} and a recent generalization of these extreme eigenvalue 
results can be found in \cite{bchj21}. There has also been some 
extraordinary progress in the study of local eigenvalue statistics, 
namely, bulk universality has been proven for $0< \alpha < 2$ 
\cite{ALY18} and for random matrices with $2+\epsilon$ moments 
\cite{A19}, in both of these cases the local eigenvalue statistics fall 
in the GOE universality class. For an overview and a brief survey of 
how heavy-tailed matrices differ from light-tailed ensembles see 
\cite{G18}. Also, while our focus is symmetric matrix ensembles, there 
are several results pertaining to other models such as non-Hermitian 
matrices with heavy-tailed entries, for instance, the recent paper 
\cite{COR20} analyzes a heavy-tailed elliptic random matrix ensemble 
and contains a discussion of the known results for the spectra and 
eigenvector statistics of heavy-tailed random matrix models.

The topic of interest in this paper is the fluctuation at the global 
scale for a subclass of such heavy-tailed real symmetric random 
matrices. Many aspects of the behavior of such fluctuations have been 
understood for all $\alpha$. In all cases, they form a Gaussian process 
with a known expression for the covariance see \cite{bggm14} for 
$\alpha < 2$, \cite{bgm16}) for $2 < \alpha < 4$, and \cite{BS10} for 
the case $\alpha > 4$. Each of these three cases corresponds to a 
genuinely different regime, with both the scaling and the covariance 
changing abruptly at the transition points. For example, when $\alpha 
\in (0,2)$ it is known that the fluctuations of linear statistics of 
its eigenvalues are of order $N^{-\frac{1}{2}}$ rather than the 
$N^{-1}$ behavior of the Wigner case, and for $2 < \alpha < 4$, the 
fluctuations are of order $N^{-\frac{\alpha}{4}}$ --- note that the 
exponent linearly interpolates between the $\alpha$-stable regime and 
the Wigner regime. In the case $0 < \alpha < 2$ the covariance of the 
resolvent is given by coupled fixed point equations that are formidable 
to analyze --- even more so than the spectral measure since they rely 
on an understanding of the fixed point equations for the measure 
$\mu_\alpha$.

We will concentrate on matrices whose entries have tail behavior given 
by $2 < \alpha < 4$. The limiting spectral measure of such matrices is 
still the semicircle law, which is a measure supported on a single 
interval, and therefore is a natural case to compare to the pattern of 
results discussed in the previous section. This class of matrices 
provides a natural starting point in understanding the fluctuations of 
heavy-tailed random matrix eigenvalues.

The main achievement of the present paper is to compute in closed form 
and interpret the covariance of $\mathring{X}_N(f)$ in the context of 
random matrices whose entries have heavier tails with $2 < \alpha < 4$. 
As a starting point of our analysis we adopt the double Laplace 
transform integral formula derived in \cite{bgm16}. We compute the 
integral to arrive at a simpler expression for the covariance of 
$\mathring{X}_N(f)$ where $f(x) = (z-x)^{-1}$ (Theorem 
\ref{t:maintheorem}) which makes the dependence on $\alpha$ and $m$ 
(the Stieltjes transform of the semicircle law) easy to see and 
interpret. In Theorem \ref{thm:genPlemelj} and Corollary 
\ref{t:plemelj} we use our new formula to extract the integral kernel 
associated with this covariance. The resulting integral kernel 
demonstrates the impact of large eigenvalues typical of heavy tailed 
matrix models. 

The rest of the paper is organized as follows, Section~\ref{s:proof1} 
contains the proof of Theorem~\ref{t:maintheorem}, 
Section~\ref{s:proofgenPlemelj} the proof of 
Theorem~\ref{thm:genPlemelj} and Section~\ref{s:corPlem} contains the 
proof of Corollary~\ref{t:plemelj}. The Appendix contains elementary 
integral identities we use in our proofs.

\subsection*{Acknowledgments} We a grateful to the organizers of
IAS/Park City Mathematics Institute 2017 (NSF grant DMS-1441467), where
this collaboration and project originated. A.~M. was supported by the
Royal Society [UF160569]. A.~L. was supported by NSF grant DMS-1646108.

\section{Matrix Model and  Past Results}
In this paper, $\BA_N$ is a sequence of $N\times N$ Hermitian random 
matrix whose entries are i.i.d have first two moments finite, but not 
necessarily any higher moments, in addition to a power law tail decay 
condition.
\begin{definition}
\label{def:matmodel}
Define the sequence of $N\times N$ matrices
\begin{equation*}
\BA_N = [a_{ij}]_{1\leq i,j \leq N} = \left[
\frac{x_{i,j}}{\sqrt{N}}\right]_{1 \leq i,j \leq N},
\end{equation*}
with
\begin{itemize}
\item The $x_{i,j}$, $1 \leq i \leq j$, are i.i.d real random variables 
with mean 0 and variance 1 such that for a certain $\alpha \in (2, 4)$ 
and a certain $c > 0$, as $t \to \infty$,
\begin{equation}
\label{condition::x-ij}
\Prob(|x_{i,j}| > t) \sim \frac{c}{-\Gamma(1 - \frac{\alpha}{2})}t^{-\alpha},
\end{equation}
or
\item $x_{i,j} = x^R_{ij}/\sqrt{2} + ix^{I}_{ij}/\sqrt{2}$ for $1 < i < 
j$ and $x_{ii} = x^R_{ii}$ where $x^I_{ij}$ and $x^R_{ij}$ are i.i.d 
real symmetric random variables with mean 0 and variance 1 that satisfy 
\eqref{condition::x-ij}.
\end{itemize}
\end{definition}

For the above matrix model, the semicircle law for the eigenvalues 
still holds. It was shown in \cite{bgm16} that the spectral statistic
\begin{equation*}
\frac{1}{N^{1 -\frac{\alpha}{4}}}(\Tr \Res(z) - \E \Tr \Res(z)),
\end{equation*}
converges weakly to a centered Gaussian process $X_z$ defined for $z\in
\C\backslash\R$ where
\begin{equation*}
\Res(z) = (z \Id_N - \BA_N)^{-1}.
\end{equation*}
We restate this result, which is the foundation of our calculations in
this paper.
\begin{theorem}[\cite{bgm16}]\label{t:BGM}
For
\begin{equation*}
\Res(z) = (z \Id_N - \BA_N)^{-1},
\end{equation*}
with $\BA_N$ as above, the process
\begin{equation*}
\frac{1}{N^{1 -\frac{\alpha}{4}}}(\Tr \Res(z) - \E \Tr \Res(z)),
\end{equation*}
converges to a complex Gaussian centered process $(X_z)_{z\in \C
  \backslash \R}$ with covariance defined by the fact that $X_{\bar z}
= \overline{X_z}$ and that for any $z, w \in \C \backslash \R$,
$\E[X_z X_{w}] = C(z,w)$, for
\begin{multline}
C(z,w) := \\ \int_{0}^\infty\int_0^\infty \partial_z \partial_{w}\bigg\{[(K(z,t)
  + K(w,s))^{\alpha/2} - (K(z,t)^{\alpha/2} + K(w,s)^{\alpha/2})]
\\ \times \exp(\sgn_z itz - K(z,t) + \sgn_{w} isw - K(w,s))
\bigg\}\frac{c \diff t \diff s}{ 2 t s} \label{eqn::half-heavycovar}
\end{multline}
where $c$ and $\alpha$ are as in \eqref{condition::x-ij}, $\sgn_z =
\sgn(\Im z)$ and $K(z,t) := \sgn_z it m(z)$, $m(z)$ being the
Stieltjes transform of the semicircle law with support $[-2,2]$.
\end{theorem}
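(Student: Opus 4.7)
The plan is to isolate the contribution of atypically large entries of $\BA_N$ to $\Tr\Res(z)$, show they form a Poisson point process in the limit, and identify the resulting characteristic functional with the formula in the theorem via stable-law integral representations. As a heuristic scaling check, since $\alpha<4$ the fourth moment of $x_{i,j}$ is infinite, and its effective truncated fourth moment at the natural scale $\sqrt N$ is of order $N^{2-\alpha/2}$, giving $\mathrm{Var}(\Tr\Res(z)) \sim N^{2-\alpha/2}$, hence standard deviation $N^{1-\alpha/4}$, matching the prefactor in the statement.

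I would begin with a two-scale decomposition $\BA_N = \BA_N^< + \BA_N^>$, where $\BA_N^<$ retains only entries with $|x_{i,j}| \le b_N$ for a slowly growing $b_N$. The truncated matrix $\BA_N^<$ is light-tailed with polynomially controlled moments, so the classical CLT for Wigner linear statistics (e.g.\ \cite{BS10}) gives $\Tr\Res^<(z) - \E\Tr\Res^<(z) = O(1)$, which is negligible compared with $N^{1-\alpha/4}$ for $\alpha < 4$. The task thus reduces to analyzing the contribution of $\BA_N^>$.

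Conditioning on $\BA_N^<$, the matrix $\BA_N^>$ acts as a Hermitian perturbation supported on a small random set of index pairs, and the Sherman--Morrison--Woodbury identity gives
\begin{equation*}
\Tr\Res(z) - \Tr\Res^<(z) = \partial_z \log \det\bigl(\Id - \Res^<(z)\,\BA_N^>\bigr),
\end{equation*}
which expands into a sum of contributions from small clusters of large entries. Isotropic local laws for $\BA_N^<$ imply that its diagonal resolvent entries concentrate on $m(z)$ and its off-diagonals are of order $N^{-1/2}$, so each individual large entry $a_{i,j}^> = u/\sqrt N$ contributes, to leading order, an explicit deterministic function of $u$, $z$, and $m(z)$. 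Classical Poisson convergence theorems for heavy-tailed sums then yield convergence of the rescaled point process of large entries to a Poisson point process with intensity proportional to $|u|^{-\alpha-1}\diff u$.

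The limiting object is therefore an infinitely divisible random field indexed by $z$, and its joint Laplace functional at two spectral points $(z,w)$ is given by the L\'evy--Khintchine formula applied to the limiting Poisson process. The Frullani-type identity
\begin{equation*}
\lambda^{\alpha/2} = \frac{1}{\Gamma(-\alpha/2)}\int_0^\infty (e^{-\lambda t}-1)\, t^{-\alpha/2-1}\diff t
\end{equation*}
rewrites the L\'evy exponent in terms of the quantities $K(z,t) = \sgn_z it\,m(z)$, and the specific combination $(K(z,t)+K(w,s))^{\alpha/2}-K(z,t)^{\alpha/2}-K(w,s)^{\alpha/2}$ arises precisely when isolating the mixed term of the bivariate L\'evy exponent, which is the covariance. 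I expect the main obstacle to be rigorously showing that interactions among several simultaneous large entries are of smaller order: this requires an iterative resolvent expansion together with a priori bounds on the clustering of heavy entries in overlapping rows and columns, so that the single-entry functional genuinely dictates the limit.
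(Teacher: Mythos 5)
You should first note that the paper does not prove Theorem~\ref{t:BGM} at all: it is quoted verbatim from \cite{bgm16}, so your proposal has to be judged against the argument there and against internal consistency. The central problem is the mechanism you propose. The entries that Poissonize are only those at the maximal scale $|x_{i,j}|\asymp N^{2/\alpha}$, i.e.\ $|a_{i,j}|\asymp N^{2/\alpha-1/2}$ (the regime of \cite{abp09}); there are $O_P(1)$ of them, and each one changes $\Tr\Res(z)$ by at most a constant times $1/|\Im z|$ (rank-two resolvent perturbation bound), so their total effect is $O_P(1)=o(N^{1-\alpha/4})$ and they cannot generate the stated covariance. Your own scaling check points to the true source: the variance $\asymp N^{2-\alpha/2}$ is carried by the $\asymp N^{2-\alpha/2}$ entries with $|x_{i,j}|\asymp\sqrt N$, i.e.\ $|a_{i,j}|\asymp 1$ (entries above any level $u_0\to\infty$ number about $N^{2-\alpha/2}u_0^{-\alpha}$ and contribute a vanishing fraction of the variance). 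These are far too numerous to Poissonize or to treat as ``small clusters'': after a cut at slowly growing $b_N$ the heavy part has $\sim N^2b_N^{-\alpha}$ nonzero entries, essentially every row contains many of them, and the Woodbury/cluster expansion does not localize. Moreover, a limit built as a Poisson integral with a L\'evy--Khintchine exponent would be a non-Gaussian infinitely divisible field, contradicting the Gaussian process asserted; your outline never supplies a source of Gaussianity, and reading the covariance off ``the mixed term of the bivariate L\'evy exponent'' is not a limit theorem. (A small additional point: for $\alpha/2\in(1,2)$ the integral identity you invoke needs the subtraction $e^{-\lambda t}-1+\lambda t$, as in Lemma~\ref{lem:pvintegral1}; with $e^{-\lambda t}-1$ alone it diverges at $0$.)

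What actually happens, and roughly speaking what \cite{bgm16} (in the spirit of \cite{bggm14}) exploits, is the opposite regime of a triangular array: the fluctuation is a sum over all pairs $(i,j)$ of contributions that are bounded for fixed $\Im z$ --- each entry's effect saturates, an entry of size $a_{i,j}=u$ shifting $\Tr\Res(z)$ to leading order by roughly $\partial_z\log\big(1-u^2m^2(z)\big)$, precisely the function surfacing in Theorem~\ref{thm:genPlemelj} --- and individually negligible relative to the standard deviation $N^{1-\alpha/4}\to\infty$, so Gaussianity comes from a Lindeberg/martingale-type CLT, not from a Poisson limit. The heavy tail enters only through covariance asymptotics: representing the resolvent as $-i\sgn_z\int_0^\infty e^{i\sgn_z t(z-\BA_N)}\,\diff t$ produces the factors $e^{\sgn_z itz-K(z,t)}$ and the measure $c\,\diff t\,\diff s/(ts)$, and the regular variation of $x_{i,j}^2$ (index $\alpha/2$) gives, per entry, a covariance of order $N^{-\alpha/2}$ with profile $(K(z,t)+K(w,s))^{\alpha/2}-K(z,t)^{\alpha/2}-K(w,s)^{\alpha/2}$ (the expansion inverse to Lemma~\ref{lem:pvintegral1}); summing over $\sim N^2/2$ pairs matches the normalization $N^{2-\alpha/2}$. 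To repair your sketch you would have to replace your steps on Poisson convergence and infinite divisibility by such a CLT for the full array, keeping the saturation effect that suppresses the stable/Poisson behavior you posit.
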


The branch cut associated to the fractional power is always the
principal branch cut. This is in contrast to the choice for the
Stieltjes transform, where in the formula:
\begin{equation*}
m(z) = \frac{z - \sqrt{z^2 - 4}}{2},
\end{equation*}
the branch cut is taken to be on the positive real axis. We will remind
the reader of the appropriate branch cut when any branch cut
manipulations are performed.

\section{Main Results}

Our first result is a computation of the integral in
\eqref{eqn::half-heavycovar}.
\begin{theorem}\label{t:maintheorem}
Taking $C(z, w)$ as in Theorem \ref{t:BGM}, we have that
\begin{equation}
C(z,w) = \frac{2 c \pi  m(z)m'(z)m(w)m'(w)}{k_\alpha\sin\big(
\frac{\pi\alpha}{2} \big) (m^2(z) - m^2(w))}
\Big[ \big(-m(z)^2\big)^{\frac{\alpha}{2} - 1} -
\big(-m(w)^2\big)^{\frac{\alpha}{2} - 1}\Big]
\end{equation}
where $c$ and $\alpha$ are as in \eqref{condition::x-ij}, $\sgn_z =
\sgn(\Im z)$, $m(z)$ is the
Stieltjes transform of the semicircle law with support $[-2,2]$, and
\begin{equation}
\label{eqn:normalization}
k_\alpha := 
\frac{\Gamma\big(2-\frac{\alpha}{2}\big)}{\frac{\alpha}{2}\big( 
\frac{\alpha}{2} - 1\big)}.
\end{equation}
\end{theorem}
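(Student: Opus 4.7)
The plan is to reduce the double integral to a clean two-dimensional integral via a contour substitution that trivializes the exponent, then interchange differentiation with integration and compute by polar coordinates. By the symmetry $X_{\bar z} = \overline{X_z}$ it suffices to treat $z, w \in \mathbb{C}^+$, where $\sgn_z = \sgn_w = 1$.

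Using the identity $z - m(z) = 1/m(z)$ (from $m(z)^2 - z\, m(z) + 1 = 0$), the exponent satisfies $itz - K(z,t) = it/m(z)$. I would substitute $\tau = -it/m(z)$ and $\sigma = -is/m(w)$, so that $\diff t/t = \diff\tau/\tau$, $\diff s/s = \diff\sigma/\sigma$, $K(z,t) = -m(z)^2\tau$ and $K(w,s) = -m(w)^2\sigma$. The positive-real $t,s$ axes are carried to rays in the right half-plane, which I would deform back to $\mathbb{R}_{>0}$ by Cauchy's theorem, using that in the swept wedge the integrand is holomorphic (the combination $-m(z)^2\tau - m(w)^2\sigma$ stays in a half-plane avoiding the branch cut of $(\,\cdot\,)^{\alpha/2}$), cancels the $1/\tau,1/\sigma$ singularities at the origin through the linear vanishing of $(a\tau+b\sigma)^{\alpha/2} - (a\tau)^{\alpha/2} - (b\sigma)^{\alpha/2}$, and decays exponentially at infinity. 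With $a = -m(z)^2$ and $b = -m(w)^2$, this identifies the double integral with
\begin{equation*}
G(a,b) := \iint_{\tau,\sigma > 0}\bigl[(a\tau+b\sigma)^{\alpha/2} - (a\tau)^{\alpha/2} - (b\sigma)^{\alpha/2}\bigr] e^{-\tau-\sigma}\,\frac{\diff\tau\,\diff\sigma}{\tau\sigma}.
\end{equation*}

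Next I would interchange $\partial_z\partial_w$ with the integral and use the chain rule to get $\partial_z\partial_w H(z,w) = 4\, m(z)m'(z)m(w)m'(w)\, G_{ab}(a,b)$. The mixed partial $G_{ab}$ is clean: the single-variable pieces $(a\tau)^{\alpha/2}$ and $(b\sigma)^{\alpha/2}$ are annihilated, while $\partial_a\partial_b[(a\tau+b\sigma)^{\alpha/2}] = (\alpha/2)(\alpha/2-1)\tau\sigma(a\tau+b\sigma)^{\alpha/2-2}$ produces a factor $\tau\sigma$ that cancels the $1/(\tau\sigma)$ measure, leaving
\begin{equation*}
G_{ab}(a,b) \;=\; \frac{\alpha(\alpha-2)}{4}\iint_{\tau,\sigma>0}(a\tau+b\sigma)^{\alpha/2-2}\, e^{-\tau-\sigma}\,\diff\tau\,\diff\sigma.
\end{equation*}
The polar substitution $\tau = rx$, $\sigma = r(1-x)$ with $r > 0$, $x \in (0,1)$ separates this into the radial integral $\int_0^\infty r^{\alpha/2-1}e^{-r}\,\diff r = \Gamma(\alpha/2)$ and the angular integral $\int_0^1 (ax+b(1-x))^{\alpha/2-2}\,\diff x = (a^{\alpha/2-1} - b^{\alpha/2-1})/((\alpha/2-1)(a-b))$, yielding
\begin{equation*}
G_{ab}(a,b) \;=\; \Gamma\!\left(\tfrac{\alpha}{2}+1\right)\frac{a^{\alpha/2-1} - b^{\alpha/2-1}}{a-b}.
\end{equation*}

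Substituting $a = -m(z)^2$ and $b = -m(w)^2$ into $C(z,w) = -c\,\partial_z\partial_w H$ produces the advertised shape. To match the stated prefactor I would rewrite $\Gamma(\alpha/2+1)$ via the reflection identity $\Gamma(\alpha/2)\Gamma(1-\alpha/2) = \pi/\sin(\pi\alpha/2)$ together with $\Gamma(2-\alpha/2) = (1-\alpha/2)\Gamma(1-\alpha/2)$ and the definition of $k_\alpha$, converting it into $\pi/(k_\alpha\sin(\pi\alpha/2))$. The principal obstacle is the contour-deformation step in the presence of branch cuts: since the individual summands $(a\tau)^{\alpha/2}e^{-\tau-\sigma}/(\tau\sigma)$ are not integrable in $\sigma$, one cannot split the sum before taking the mixed partial, and the principal-branch fractional powers must be tracked consistently as $\tau,\sigma$ rotate from the original $z$- and $w$-dependent rays to the positive reals.
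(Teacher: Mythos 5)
Your algebraic skeleton is sound and it is genuinely different from the paper's route (the paper never moves the $t,s$ contours: it linearizes the fractional power through the subordination identity of Lemma~\ref{lem:pvintegral1}, integrates out $t$ and $s$ into logarithms, and finishes with the keyhole evaluation of Lemma~\ref{lem:resIntegral}). The gap is the contour deformation, and the justification you give for it is false on part of the domain. Write $a=-m(z)^2$, $b=-m(w)^2$. Your claim that $a\tau+b\sigma$ ``stays in a half-plane avoiding the branch cut'' fails whenever $\arg a$ and $\arg b$ straddle the negative real axis with $|\arg a-\arg b|>\pi$, i.e.\ $|\arg m(z)-\arg m(w)|>\pi/2$: take $z=3+i\varepsilon$, $w=-3+i\varepsilon$, for which $a$ and $b$ sit just above and just below $(-\infty,0)$. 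For such $(z,w)$, and for every $\sigma$ on its original ray, the zero $\tau_0=-b\sigma/a$ of $\tau\mapsto a\tau+b\sigma$ lies strictly inside the wedge you sweep between the ray $\arg\tau=-\tfrac{\pi}{2}-\arg m(z)$ and $\R_{>0}$ (and symmetrically if you deform $\sigma$ first), so the integrand is not holomorphic in the swept region, Cauchy's theorem does not give the deformation, and a jump term along the cut of $(\cdot)^{\alpha/2}$ appears. Consistently, in that regime the cone $\{a\tau+b\sigma:\tau,\sigma>0\}$ meets $(-\infty,0)$, so your principal-branch $G(a,b)$ is not the continuation of the original integral, and the angular evaluation $\int_0^1(ax+b(1-x))^{\alpha/2-2}\,\diff x=\frac{a^{\alpha/2-1}-b^{\alpha/2-1}}{(\alpha/2-1)(a-b)}$ also fails there because the segment from $b$ to $a$ crosses the cut. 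The repair is cheap: run your computation where no straddling occurs (e.g.\ $z,w$ both near the positive imaginary axis, where $a,b>0$ and the substituted rays are already $\R_{>0}$) and then extend to all of $\C^+\times\C^+$ by analyticity of both sides in $(z,w)$, as the paper notes $C$ is analytic off $\R$ and the closed form has only a removable singularity at $m^2(z)=m^2(w)$.

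Two further points. First, the symmetry $X_{\bar z}=\overline{X_z}$ gives only $C(\bar z,\bar w)=\overline{C(z,w)}$; the mixed case $z\in\C^+$, $w\in\C^-$ lies in a different component of $(\C\setminus\R)^2$ and is reached neither by that symmetry nor by continuation from $\C^+\times\C^+$, so you must at least remark that the same substitution (now with $\sigma=is/m(w)$) runs verbatim there --- the paper's computation is sign-uniform and needs no such case split. Second, the prefactor: your computation yields
\[
C(z,w)=4c\,\Gamma\!\Big(\tfrac{\alpha}{2}+1\Big)\,m(z)m'(z)m(w)m'(w)\,
\frac{\big(-m(z)^2\big)^{\frac{\alpha}{2}-1}-\big(-m(w)^2\big)^{\frac{\alpha}{2}-1}}{m^2(z)-m^2(w)},
\]
but the conversion you assert is not an identity: with $k_\alpha$ as in \eqref{eqn:normalization} one has $\pi/\big(k_\alpha\sin\tfrac{\pi\alpha}{2}\big)=-\tfrac{\alpha+2}{2(\alpha-2)}\,\Gamma(\tfrac{\alpha}{2}+1)$, not $\Gamma(\tfrac{\alpha}{2}+1)$. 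This is entangled with the paper's own normalization (the proof of Lemma~\ref{lem:pvintegral1} actually produces the constant $\Gamma(2-\tfrac{\alpha}{2})/\{\tfrac{\alpha}{2}(\tfrac{\alpha}{2}-1)\}$, and the theorem display differs from \eqref{eqn:covarFinal} by a sign; with that value of $k_\alpha$ your expression coincides with \eqref{eqn:covarFinal}), so carry out the constant bookkeeping explicitly and flag the discrepancy rather than asserting the match.
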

\begin{remark}
\label{rem:branchcut}
In a further simplification we can write
\begin{multline}
C(z,w) =
-\frac{2 c \pi  m(z)m'(z)m(w)m'(w)}{k_\alpha\sin\big(
\frac{\pi\alpha}{2} \big) (m^2(z) - m^2(w))}\times\\
\bigg( \exp\Big[\frac{(-\sgn z)\pi \alpha i}{2}\Big]m(z)^{\alpha - 2} -
\exp\Big[\frac{(-\sgn w)\pi \alpha i}{2}\Big]m(w)^{\alpha - 2}\bigg)
\end{multline}
\end{remark}
The following Theorem expresses the covariance $C(z,w)$ in an 
alternative integral form.
\begin{theorem}
\label{thm:genPlemelj}
Let $c$ and $k_\alpha$ be defined as in equations~\eqref{condition::x-ij}
and~\eqref{eqn:normalization} respectively.
Let $\psi, \phi \in C_b(\R)$ be bounded continuous functions  and
define
\[
\psi \otimes \phi: \R^2 \to \R \qquad \psi\otimes\phi:= \psi(x)\phi(y) \qquad (x,y)\in \R^2.
\]
Then letting, $z = E+i\eta_1$ and $w = F+i\eta_2$, the pairing
\begin{equation*}
-\frac{1}{4\pi^2}\iint_{\R^2} \bigg\{ C(z, w) + C(\bar{z},\bar{w}) -  
C(\bar{z},w) - C(z,\bar{w}) \bigg\}\psi(E)\phi(F)\,\diff E \,\diff F
\end{equation*}
converges as $\eta_1, \eta_2\downarrow 0$ to the pairing
\begin{equation*}
\langle \Ker_\alpha, \phi\otimes \psi \rangle := \frac{c}{ k_\alpha}\int_0^\infty
\frac{\Lambda_u(\psi)\Lambda_u(\phi)}{ u^{1+\alpha}}\, \diff u
\end{equation*}
where $\Lambda_u$ is a measure indexed by $u$ defined by
\begin{multline*}
\Lambda_u :=  \frac{1}{\pi}\frac{2u^4+2u^2 - u^2x^2}{ \sqrt{4-x^2} \Big(
    -u^2x^2 + u^4 +2u^2+1\Big)}\mathbf{1}_{|x|\leq 2}+
  \big(\delta_{u+u^{-1}}+\delta_{-u-u^{-1}}\big)\mathbf{1}_{u\geq 1}.
\end{multline*}
and $\delta_{x_0}$ is the usual point measure $\delta_{x_0}(\phi)= 
\phi(x_0)$. In particular for $\psi(x) = (z-x)^{-1}$ and $\phi(y) = 
(w-y)^{-1}$ (where we extend $\Ker_\alpha$ to complex-valued bounded 
real continuous functions by linearity) we have
\[
\langle \Ker_\alpha, \phi\otimes \psi\rangle = C(z,w).
\]
\end{theorem}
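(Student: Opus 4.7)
The strategy is to establish the closed-form ``in particular'' identity
\begin{equation}
\label{eq:astprop}
C(z,w) = \frac{c}{k_\alpha}\int_0^\infty \frac{\Lambda_u(\psi_z)\Lambda_u(\psi_w)}{u^{1+\alpha}}\,du,\qquad \psi_z(x):=(z-x)^{-1},
\end{equation}
and to deduce the general limit statement for $\psi,\phi\in C_b(\R)$ from it by a double Stieltjes inversion. Once \eqref{eq:astprop} is in hand, the general case is routine: Schwarz reflection $\overline{C(z,w)} = C(\bar z,\bar w)$, inherited from Theorem~\ref{t:BGM}, rewrites the symmetric combination in the theorem as $-4\operatorname{Im}_z\operatorname{Im}_w C(z,w)$, so the pairing equals $\frac{1}{\pi^2}\iint \operatorname{Im}_z\operatorname{Im}_w C(z,w)\,\psi(E)\phi(F)\,dE\,dF$. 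Inserting \eqref{eq:astprop}, the integrand factorizes as a product in $z$ and in $w$, so the double antisymmetrization commutes with the $u$-integral and splits into single antisymmetrizations; the Plemelj formula $\int \operatorname{Im}\Lambda_u(\psi_{E+i\eta})\psi(E)\,dE \to -\pi\,\Lambda_u(\psi)$ applied to each factor then produces two copies of $-\pi$ that cancel the $1/\pi^2$ prefactor and yield the claimed $\tfrac{c}{k_\alpha}\int_0^\infty u^{-1-\alpha}\Lambda_u(\psi)\Lambda_u(\phi)\,du$.

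\textbf{Computing $\Lambda_u(\psi_z)$.} To establish \eqref{eq:astprop} I first obtain $\Lambda_u(\psi_z)$ in closed form. A partial fraction of the density numerator gives
\begin{equation*}
\rho_u(x) = \frac{1}{\pi\sqrt{4-x^2}}\biggl(1 + \frac{u-u^{-1}}{2}\Bigl(\frac{1}{p-x} + \frac{1}{p+x}\Bigr)\biggr),\qquad p:=u+u^{-1},
\end{equation*}
which reduces the density integral to a combination of the arcsine Stieltjes transform $S(z):=1/\sqrt{z^2-4}$ and its companion $\int_{-2}^2 dx/(\pi\sqrt{4-x^2}(z-x)(p-x)) = (S(z)-S(p))/(p-z)$, namely
\begin{equation*}
\int_{-2}^2 \frac{\rho_u(x)\,dx}{z-x} = S(z) + \frac{(u-u^{-1})(p\,S(z) - z\,S(p))}{p^2-z^2},\qquad S(p) = \frac{1}{|u-u^{-1}|}.
\end{equation*}
Adding $2z/(z^2-p^2)\mathbf{1}_{u\geq 1}$ for the two point masses yields $\Lambda_u(\psi_z)$ as a rational expression in $z, p, S(z), S(p)$. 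Substituting back into \eqref{eq:astprop} and expanding $\Lambda_u(\psi_z)\Lambda_u(\psi_w)$ splits the integrand into bulk-bulk, bulk-outlier, and outlier-outlier contributions, each of which reduces to a Mellin integral of the type $\int_0^\infty u^{s-1}/(a+u^2)^n\,du = a^{s/2-n}\Gamma(s/2)\Gamma(n-s/2)/(2\Gamma(n))$ or an easily-derived variant. Summing these pieces and re-expressing in $m(z), m(w)$ via $z=m+1/m$ together with the identity $S(z) = -m'(z)/m(z) = m(z)/(1-m^2(z))$ should reduce \eqref{eq:astprop} to a rational algebraic identity matching the closed form of $C(z,w)$ in Theorem~\ref{t:maintheorem}.

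\textbf{Main obstacle.} The principal difficulty is the algebraic bookkeeping in the Mellin-integration step: the integration range splits at $u=1$, across which $\operatorname{sgn}(u-u^{-1})$ flips and the point masses appear or disappear, so the bulk-outlier and outlier-outlier pieces contribute only from $u\geq 1$; each of the cross-term classes produces a distinct Mellin integral with different $\alpha$- and $z$-dependence; and the final comparison with the branch-cut-sensitive factor $[(-m^2(z))^{\alpha/2-1} - (-m^2(w))^{\alpha/2-1}]/(m^2(z)-m^2(w))$ from Theorem~\ref{t:maintheorem} requires careful accounting of the principal branch of the fractional power. Once the $m(\cdot)$-parametrization is in force and $S(z)$ is replaced by $m(z)/(1-m^2(z))$ throughout, the final matching should reduce to a direct rational identity in $m(z), m(w)$.
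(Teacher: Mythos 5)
There is a genuine gap, and it lies precisely in the step you label ``routine.'' Your plan inverts the structure of the argument: the resolvent identity $C(z,w)=\frac{c}{k_\alpha}\int_0^\infty u^{-1-\alpha}\Lambda_u(\psi_z)\Lambda_u(\psi_w)\,\diff u$ is in fact the easy part --- it follows almost immediately from the representation \eqref{e:intrep} after the substitution $r=u^{-2}$, once one checks $\Lambda_u\big((x-z)^{-1}\big)=\frac{2u^2m(z)m'(z)}{1-u^2m^2(z)}$, which can be done either by your partial-fraction computation or by a one-line Sokhotski--Plemelj argument applied to the function in \eqref{eqn:cauchyproblem1}; no Mellin integrals and no matching against the branch-cut-sensitive closed form of Theorem~\ref{t:maintheorem} are needed, so the ``main obstacle'' you identify is avoidable entirely. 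What cannot be dispatched by ``double Stieltjes inversion'' is the passage from this identity to the limit statement for arbitrary $\psi,\phi\in C_b(\R)$, which is the actual content of Theorem~\ref{thm:genPlemelj}.

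Concretely, two things are asserted without justification and neither is standard. First, to take $\eta_1,\eta_2\downarrow 0$ inside the $u$-integral (and to split the antisymmetrized double integral as a product of single integrals) you need a dominating function integrable against $u^{-1-\alpha}\diff u$, uniformly in $\eta\leq 1$; since $\psi$ is merely bounded and does not decay, the $E$-integral of $\big|\Im\{2u^2m(E+i\eta)m'(E+i\eta)/(1-u^2m^2(E+i\eta))\}\big|\,|\psi(E)|$ must be controlled by the total variation in $E$ of $\arg\{1-u^2m^2(E+i\eta)\}$, and because $2<\alpha<4$ you moreover need this bound to improve to order $u^4$ as $u\to 0$ or the $u$-integral diverges at the origin. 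This is exactly what Lemmas~\ref{lem:argvar}--\ref{lem:domconvinu} of the paper provide (piecewise monotonicity of the argument via Budan's theorem, the uniform bound $18\|\psi\|_\infty$, and the refinement of order $u^2$ per factor for small $u$). Second, your per-$u$ ``Plemelj formula'' $\int_\R\Im\Lambda_u(\psi_{E+i\eta})\psi(E)\,\diff E\to-\pi\Lambda_u(\psi)$ is not a citable inversion theorem in this setting: the density $-\frac{1}{\pi}\Im\{2u^2mm'/(1-u^2m^2)\}$ is signed (it changes sign inside $(-2,2)$ when $u<1$), the limit $\Lambda_u$ acquires atoms at $\pm(u+u^{-1})$ from the jump of $\arg\{1-u^2m^2(x+i\eta)\}$ across $2\leq|x|\leq u+u^{-1}$, and the test functions do not vanish at infinity, so weak-$*$ convergence against $C_b$ requires the uniform-in-$\eta$ control of escaping mass and the explicit boundary-value computation (mollification, integration by parts, and the pointwise limit of the argument) carried out in the paper's proof. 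Until you supply these estimates, your sketch proves the ``in particular'' identity but not the theorem.
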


The above result suggests that for a finite collection of test
functions $\psi_{1}$, $\ldots$, $\psi_m \in \mathcal{F}_\alpha$
where $\mathcal{F}_\alpha$ contains at least the subset
$\mathcal{G}_\alpha$ of finite linear combinations of functions
of the form $f(x) = (z-x)^{-1}$ for some $z \in \C \backslash \R$, the
vector
\[
N^{1-\frac{\alpha}{4}}\Big( \Tr \psi_1(\BA_N) -  \E\Tr \psi_1(\BA_N),
\ldots, \Tr \psi_m(\BA_N) - \E\Tr\psi_m(\BA_N)\Big)
\]
converges weakly to a centered Gaussian vector whose covariance matrix 
has entries $ \langle\Ker_\alpha , \psi_i \otimes \psi_j\rangle$. We 
conjecture the class $\mathcal{F}_\alpha$ is the space of all functions 
$\psi$ for which $\langle \Ker_\alpha, \psi\otimes \psi\rangle$ is 
finite. In order to prove such a Central Limit Theorem, the following 
approach appearing in \cite{Sh11} seems accessible given the explicit 
form of $\Ker_\alpha$:  if we can establish a variance bound
\[
N^{2-\frac{\alpha}{2}} \E\bigg[\Big\{\Tr \psi(\BA_N) - 
\E\Tr\psi(\BA_N)\Big\}^2\bigg] \leq  M \langle \Ker_\alpha, \psi\otimes 
\psi\rangle,
\]
where $M > 0$ is a constant independent of $N$, then by 
\cite[Proposition 3]{Sh11} combined with the known Central Limit 
Theorem~\ref{t:BGM} it follows that the the Central Limit Theorem holds 
for $\psi \in \mathcal{F}_\alpha$ so long as the space 
$\mathcal{G}_\alpha$ is dense in $\mathcal{F}_\alpha$.

More qualitatively, our result helps contextualize the fluctuations of
the eigenvalues of $\BA_N$ at a macroscopic scale with other known
results for eigenvalue statistics for this model. Consider, for
example, the limiting distribution of the largest eigenvalues of
$\BA_N$ computed in the paper \cite[Section 4]{abp09}; the eigenvalues
of $\BA_N$ were shown to converge to a Poisson process under
appropriate scaling
\begin{equation*}
\begin{split}
\sum_{j=1}^N &\delta_{\tilde{b}_N^{-1}\lambda_j(\BA_N)}(\diff x)
\mathbf{1}_{\lambda_j(\BA_N) > 0} \Rightarrow \mathcal{P}_\alpha,\\
\tilde{b}_N &:=\bigg[\frac{N(N+1)}{2}\bigg]^{\frac{1}{\alpha}}
\frac{1}{\sqrt{N}} \sim\frac{N^{\frac{2}{\alpha} -
\frac{1}{2}}}{2^\frac{1}{\alpha}},
\end{split}
\end{equation*}
where $\mathcal{P}_\alpha$ is a Poisson measure on $(0,\infty)$ with 
intensity measure
\begin{equation*}
\rho_\alpha(x) := \frac{\alpha}{x^{1+\alpha}}.
\end{equation*}
Theorem~\ref{thm:genPlemelj} above shows that this intensity measure
appears in the covariance formula $C(z,w)$. In particular, note that
the point masses in the measure $\Lambda_u$ are located at $u+ u^{-1}$
and $-u-u^{-1}$ for $u\geq 1$ taking values on $[-2,2]^c$ which are
outside the support of the semicircle density. If $\psi$ and $\phi$
were test functions supported outside of $[-2,2]$ then the only
contribution in the above covariance would be given by these point
masses integrated to a measure proportional to the intensity measure
$\rho_\alpha$ above. It was observed in \cite{abp09} that the
behavior of the extreme eigenvalues exhibited different behaviors in
the regimes $\alpha > 4$,  $2 < \alpha < 4$ and the genuinely
heavy-tail case of $\alpha < 2$. This relationship parallels the
changing behavior of the fluctuation of linear statistics of the
eigenvalues noted in \cite{bgm16}.  The original covariance
formula presented in Theorem~\ref{thm:genPlemelj} did not provide any
insight into this phenomenon and its relationship to other known
features about the eigenvalue distribution of this class of matrices.
We also provide the following variant of the above formula which may be
easier to use for explicit or numerical evaluation.
\begin{corollary}
\label{t:plemelj}
Let $c$ and $k_\alpha$ be defined as in 
equations~\eqref{condition::x-ij} and~\eqref{eqn:normalization} 
respectively and let $m_{\pm}(E) = \lim_{\eta\downarrow 0} m(E\pm 
i\eta)$. We may write the action of $\langle \Ker_\alpha , \psi\otimes 
\phi\rangle$ as the integral of $\psi(E)\phi(F)$ against the following 
distribution
\[
\Ker_\alpha(E,F) = \frac{c}{k_\alpha}\begin{cases}
\frac{|m_+(E)|^{\alpha-1}(2+m_+(E)^2(2-F^2))m_+'(E)}{\pi(-m_+(E)^{2} 
F^2+(m_+(E)^{2}+1)^2)\sqrt{4-F^2}} &\hbox{if $|E| < 2$ and $|F|>2$}\\ 
\frac{|m_+(F)|^{\alpha-1}(2+m_+(F)^2(2-E^2))m_+'(F)}{\pi(-m_+(F)^{2} 
E^2+(m_+(F)^{2}+1)^2)\sqrt{4-E^2}} &\hbox{if $|E| > 2$ and $|F|<2$}\\ 
(\delta_{E-F} + \delta_{E+F}) |m_+(E)|^{\alpha - 1} |m_+'(E)|& \hbox{if 
$|E|>2$ and $|F| > 2$},
\end{cases}
\]
and for $|E|,|F|< 2$,
\begin{multline*}
\Ker_\alpha(E,F) =
\frac{-c }{\pi k_\alpha \sin\big(\frac{\pi \alpha}{2}\big)}\times\\
\bigg\{
\frac{m_+(E)m'_+(E)m_+(F)m'_+(F)}{ m^2_+(E) - m^2_+(F)}
\bigg[\Big(-m_+(E)^2\Big)^{\frac{\alpha}{2} - 1}
- \big(-m_+(F)^2\big)^{\frac{\alpha}{2} -1 }\bigg]
\\
+\frac{m_-(E)m'_-(E)m_-(F)m'_-(F)}{ m^2_-(E) - m^2_-(F)}
\bigg[\Big(-m_-(E)^2\Big)^{\frac{\alpha}{2} - 1}
- \big( -m_-(F)^2\big)^{\frac{\alpha}{2} - 1}
\bigg]
\\
-\frac{m_+(E)m'_+(E)m_-(F)m'_-(F)}{ m^2_+(E) - m^2_-(F)}
\bigg[\Big(-m_+(E)^2\Big)^{\frac{\alpha}{2} - 1}
- \Big(-m_-(F)^2\Big)^{\frac{\alpha}{2} - 1}
\bigg]
\\
-\frac{m_-(E)m'_-(E)m_+(F)m'_+(F)}{ m^2_-(E) - m^2_+(F)}
\bigg[\Big(-m_-(E)^2\Big)^{\frac{\alpha}{2} - 1}
-  \Big(-m_+(F)^2\Big)^{\frac{\alpha}{2} - 1}\bigg]
\bigg\},
\end{multline*}
In particular, for $\alpha = 3$ we have
\[
\Ker_3 (E, F)=
\frac{15c}{8\sqrt{\pi}}
\begin{cases}
\frac{1}{\pi}
\bigg(1 + \frac{(F^2 - 2)(E^2-2)}{\sqrt{4 -
F^2}\sqrt{4-E^2}}\bigg) \frac{1}{\sqrt{4 - E^2} + \sqrt{4 - F^2}},
&\hbox{if $|E|<2$ and $|F| < 2$},\\
\frac{|m_+(E)|^{2}(2+m_+(E)^2(2-F^2))m_+'(E)}{\pi(-m_+(E)^{2} F^2+(m_+(E)^{2}+1)^2)\sqrt{4-F^2}}
&\hbox{if $|E| < 2$ and $|F|>2$},\\
\frac{|m_+(F)|^{2}(2+m_+(F)^2(2-E^2))m_+'(F)}{\pi(-m_+(F)^{2} E^2+(m_+(F)^{2}+1)^2)\sqrt{4-E^2}}
&\hbox{if $|E| > 2$ and $|F|<2$},\\
(\delta_{E-F} + \delta_{E+F})|m(E)|^2m'(E), &\hbox{if $|E|>2$ and $|F|>2$}.
\end{cases}
\]
\end{corollary}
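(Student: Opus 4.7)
The plan is to read $\Ker_\alpha$ off the representation in Theorem~\ref{thm:genPlemelj} by splitting the measure $\Lambda_u$ as $\Lambda_u = \Lambda_u^{\mathrm{ac}} + \Lambda_u^{\mathrm{sing}}$, where $\Lambda_u^{\mathrm{ac}}$ is the absolutely continuous part supported on $[-2,2]$ and $\Lambda_u^{\mathrm{sing}} := (\delta_{u+u^{-1}}+\delta_{-u-u^{-1}})\mathbf{1}_{u\geq 1}$ is the singular part supported on $[-2,2]^c$. Expanding $\Lambda_u(\psi)\Lambda_u(\phi)$ gives four bilinear terms whose $(E,F)$-supports partition $\R^2$ into exactly the four regions appearing in the Corollary, so each term can be identified with the corresponding case.

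For the singular-singular piece ($|E|,|F|>2$) the two point masses localize the $u$-integration to $\{E=\pm F\}$. Using the semicircle identity $m(z)+m(z)^{-1}=z$ to substitute $u=1/|m_+(E)|$ (so that $u+u^{-1}=|E|$ for $|E|>2$), the elementary identity
\[
u^{-1-\alpha}\cdot\frac{du}{d|E|}\;=\;|m_+(E)|^{\alpha-1}\cdot\frac{m_+(E)^2}{1-m_+(E)^2}\;=\;|m_+(E)|^{\alpha-1}|m_+'(E)|
\]
produces the stated delta-function kernel. For the two mixed pieces ($|E|<2<|F|$ and vice versa) only one of the point masses collapses; substituting $u=1/|m_+|$ of the outside-variable into the remaining $\Lambda_u^{\mathrm{ac}}$ factor and cancelling the common $m_+^{-4}$ in the resulting rational expression in $u$ yields the advertised absolutely continuous density.

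The ac-ac piece ($|E|,|F|<2$) is the only piece supported in the in-band region, so here $\Ker_\alpha$ must coincide with the distributional boundary value
\[
-\frac{1}{4\pi^2}\lim_{\eta_1,\eta_2\downarrow 0}\Bigl\{C(z,w)+C(\bar z,\bar w)-C(\bar z,w)-C(z,\bar w)\Bigr\}
\]
from Theorem~\ref{thm:genPlemelj}; this boundary value is an ordinary locally integrable function on $\{|E|,|F|<2\}$ because all four limiting values of $m$ have nonzero imaginary parts so that $m^2(z)-m^2(w)$ does not vanish. Substituting the closed form from Theorem~\ref{t:maintheorem} with the replacements $m(z)\to m_+(E)$, $m(\bar z)\to m_-(E)$, $m(w)\to m_+(F)$, $m(\bar w)\to m_-(F)$ yields immediately the four-term expression claimed.

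For $\alpha=3$ the first three cases are immediate specializations. The ac-ac piece is not, and this is where the real work lies: using $E=2\cos\theta$ with $m_\pm(E)=e^{\mp i\theta}$ on $[-2,2]$, we have $|m_\pm(E)|=1$, $\sqrt{4-E^2}=-i(m_+(E)-m_-(E))$, and crucially $(-m_\pm(E)^2)^{1/2}=\pm i\,m_\pm(E)$ under the principal branch. Substituting into the four-term expression and pairing the resulting terms using $m_+(E)m_-(E)=1$, $m_+(E)+m_-(E)=E$, and the analogous identities in $F$, the eight rational expressions collapse to the single compact fraction claimed. This final algebraic collapse, which requires exploiting the symmetry between the $+$ and $-$ branches in $E$ and $F$ simultaneously, is the step requiring the most care.
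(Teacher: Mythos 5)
Your plan follows the paper's proof essentially verbatim: expand $\Lambda_u(\psi)\Lambda_u(\phi)$ into the four ac/singular bilinear pieces (one per region), substitute $u = 1/m_+$ for each point-mass factor, identify the ac-ac piece as the boundary value of $C(z,w)$ from Theorem~\ref{t:maintheorem} / Remark~\ref{rem:branchcut}, and specialize to $\alpha=3$ using $|m_\pm|=1$ together with $(-m_\pm^2)^{1/2}=\pm i\,m_\pm$. (One small slip: since $m_+(E)-m_-(E) = -i\sqrt{4-E^2}$ for $|E|<2$, your identity should read $\sqrt{4-E^2}=i(m_+(E)-m_-(E))$, not $-i(m_+(E)-m_-(E))$; this does not change the structure of the argument.)
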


\begin{figure}[t]
\centering{\includegraphics[scale=0.6]{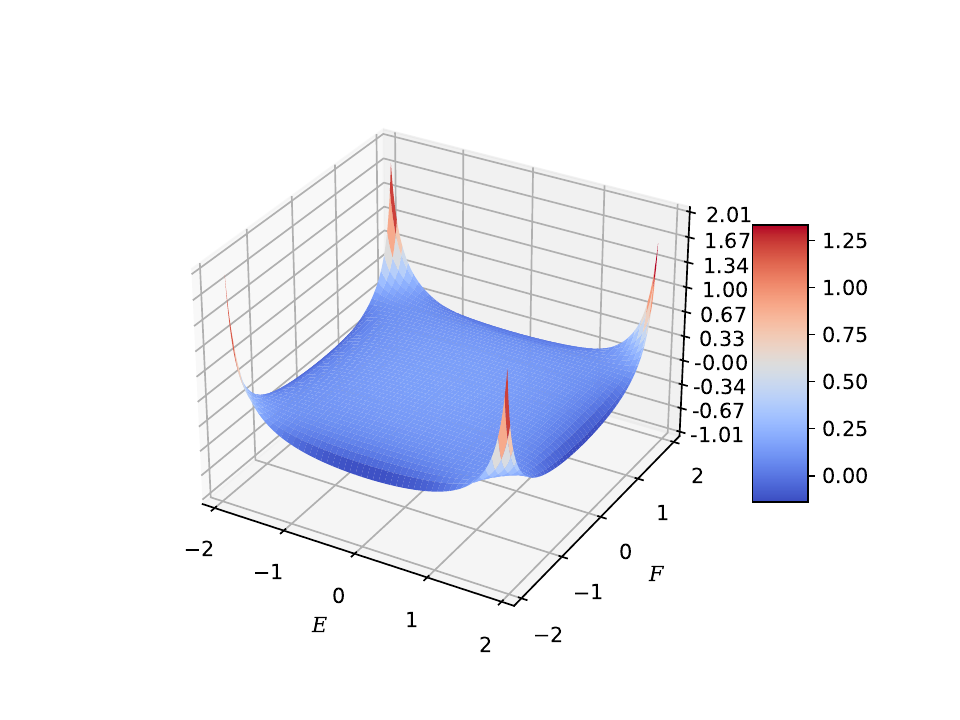}
\caption{Plot of $\mathcal{K}_3$  with $|E|, |F| < 2$ and $c = \frac{8\sqrt{\pi}}{15}$.}
}
\end{figure}

\section{Proof of Theorem \ref{t:maintheorem}: Rewriting the Integral}
\label{s:proof1}

\begin{proof}
By the fixed point equation representation of $m(z)$:
\begin{equation*}
m(z) = \frac{1}{z - m(z)},
\end{equation*}
we can write
\begin{equation}
\label{eqn:covar-rewrite}
C(z,w) = \frac{c}{2} \int_0^\infty \int_0^\infty \partial_z
\partial_w\cL(z,w;t,s)\,\diff t \, \diff s,
\end{equation}
where
\begin{multline*}
\cL(z,w; t,s) := \frac{1}{ts}\bigg\{ \big(it \sgn_z m(z) + is \sgn_w
m(w)\big)^\frac{\alpha}{2} - \big(i t \sgn_z
m(z)\big)^\frac{\alpha}{2} \\ - \big( i s \sgn_w
m(w)\big)^\frac{\alpha}{2}\bigg\} \exp\bigg(\frac{it \sgn_z}{m(z)} +
\frac{is \sgn_w}{m(w)}\bigg).
\end{multline*}
we have that
\begin{equation*}
\Re\bigg(\frac{i\sgn_z}{m(z)}\bigg) < 0 \qquad \text{and}\qquad
\Re\bigg(\frac{i\sgn_w}{m(w)}\bigg) < 0,
\end{equation*}
so for each $z, w \in \C \backslash \R$, $\cL(z,w; t,s)$ decays in $t$
and $s$ exponentially. We wish to pull out the derivatives of $z$ and
$w$ out of the integral in \eqref{eqn:covar-rewrite}. To do so,
observe that by the Cauchy integral formula,
\begin{equation*}
\partial_z \partial_w \cL(z,w;t,s) = - \frac{1}{4\pi^2} \oint_{\cC_z}
\oint_{\cC_w} \frac{\cL(\rho,\gamma; t,s)}{(\rho - z)^2(\gamma - w)^2}
\,\diff \rho \, \diff \gamma,
\end{equation*}
where $\cC_z$ and $\cC_w$ are contours containing $z$ and $w$
respectively --- we take them to be balls of a small radius so they
avoid the branch cut. Now we have
\begin{multline*}
\int_0^\infty \int_0^\infty \partial_z \partial_w \cL(z,w;t,s)\,\diff
t \, \diff s = \\ -\int_0^\infty \int_0^\infty \frac{1}{4\pi^2}
\oint_{\cC_z} \oint_{\cC_w} \frac{\cL(\rho,\gamma; t,s)}{(\rho -
  z)^2(\gamma - w)^2} \,\diff \rho \, \diff \gamma\,\diff t\,\diff s,
\end{multline*}
now by choosing contours to be balls of radius $\delta$ we have
\begin{multline*}
\int_0^\infty \int_0^\infty \oint_{\cC_z} \oint_{\cC_w}
\bigg|\frac{\cL(\rho,\gamma; t,s)}{(\rho - z)^2(\gamma - w)^2}\bigg|
\,\diff \rho \, \diff \gamma = \\ \frac{1}{\delta^2}\int_0^\infty
\int_0^\infty \oint_{\cC_z} \oint_{\cC_w} |\cL(\rho,\gamma; t,s)|
\,\diff \rho \, \diff \gamma\, \diff t \,\diff s,
\end{multline*}
we claim that the integral in $t$ and $s$ of $|\cL(\rho,\gamma; t,s)|$
is uniformly bounded for all $\rho$ and $\gamma$ in the chosen
contours.  This allows us to rearrange the original integrals using
Fubini's Theorem,
\begin{equation*}
C(z,w) = \frac{c}{2} \partial_z \partial_w \int_0^\infty \int_0^\infty
\cL(z,w;t,s)\,\diff t \, \diff s.
\end{equation*}

Next, observe that
\begin{equation*}
\Re(- it \sgn_z m(z)) <0 ,\quad\hbox{and}\quad \Re(-i s \sgn_w m(w)) <
0,
\end{equation*}
so we may use Lemma \ref{lem:pvintegral1} to write
\begin{multline*}
\big(it \sgn_z m(z) + is \sgn_w m(w)\big)^\frac{\alpha}{2} -\big(i t
\sgn_z m(z)\big)^\frac{\alpha}{2} - \big( i s \sgn_w
m(w)\big)^\frac{\alpha}{2}= \\ \int_0^\infty\frac{1}{k_\alpha r^{1 +
\frac{\alpha}{2}}} \Big\{\exp\big(-ir\{t\sgn_z m(z)+s\sgn_w
m(w)\}\big)\\ -\exp(-ir t\sgn_z m(z))-\exp(-ir s\sgn_w m(w))+1\Big\}\,
\diff r,
\end{multline*}
obtaining the equality
\begin{multline*}
\cL(z,w;t,s) = \int_0^\infty \frac{1}{k_\alpha r^{1 +
\frac{\alpha}{2}}ts} \bigg\{\exp\bigg[it\sgn_z\bigg(-r
m(z)+\frac{1}{m(z)}\bigg)\bigg] -\exp\bigg[ \frac{i t
\sgn_z}{m(z)}\bigg]\bigg\}\\ \bigg\{\exp\bigg[is \sgn_w\bigg(-r m(w) +
\frac{1}{m(w)}\bigg)\bigg] - \exp\bigg[ \frac{is
\sgn_w}{m(w)}\bigg]\bigg\}\,\diff r.
\end{multline*}
Using the above representation, we can integrate $t$ and $s$ first by
applying Fubini's Theorem again. Observe that
\begin{multline*}
\frac{\exp\big[it\sgn_z\big(-r m(z)+\frac{1}{m(z)}\big)\big] -\exp\big[
\frac{i t \sgn_z}{m(z)}\big]}{t}\\ = -ir\sgn_z
m(z)\int_0^1\exp\bigg[it\sgn_z\bigg(-r\nu m(z)
+\frac{1}{m(z)}\bigg)\bigg] \,\diff \nu,
\end{multline*}
so
\begin{multline*}
\int_0^\infty \frac{\exp\big[it\sgn_z\big(-r
m(z)+\frac{1}{m(z)}\big)\big] -\exp\big[ \frac{i t
\sgn_z}{m(z)}\big]}{t} \,\diff t \\ = -ir\sgn_z m(z)\int_0^1
\int_0^\infty \exp\bigg[it\sgn_z\bigg(-r\nu m(z)
+\frac{1}{m(z)}\bigg)\bigg]\,\diff t \,\diff \nu, \\ =
r m(z) \int_0^1
\frac{\diff \nu}{\big(-r\nu m(z)+\frac{1}{m(z)}\big)} \\ =
- \bigg\{\log\bigg(-r m(z) + \frac{1}{m(z)}\bigg) -
\log\bigg(\frac{1}{m(z)}\bigg)\bigg\},
\end{multline*}
where we have used that $-r\nu m(z) + \frac{1}{m(z)}$ does not cross the
principal branch cut of the logarithm. We conclude
\begin{multline*}
\int_0^\infty \int_0^\infty \cL(z,w;t,s) \,\diff t \, \diff s = \\
\int_0^\infty \frac{1}{k_\alpha r^{1+\frac{\alpha}{2}}} \bigg(
\log\bigg[- r m(z) + \frac{1}{m(z)}\bigg] -
\log\bigg[\frac{1}{m(z)}\bigg]\bigg) \\ \times\bigg(\log\bigg[ -r m(w)
+ \frac{1}{m(w)}\bigg] - \log\bigg[\frac{1}{m(w)}\bigg]\bigg)\,\diff
r.
\end{multline*}
Insert the partial derivatives with respect to $z$ and $w$ back
into the integrand above to obtain
\begin{multline}
	\label{e:intrep}
C(z,w) = \frac{c}{2}\int_0^\infty
\frac{4r^{1-\frac{\alpha}{2}}}{k_\alpha}\frac{ m(z)m'(z) m(w)
m'(w)}{(r m^2(z) -1) (rm^2(w) - 1)} \,\diff r,\\
= \frac{c}{2}\int_0^\infty \frac{4 r^{\frac{\alpha}{2} - 1}}{k_\alpha}
\frac{m(z)m'(z)m(w)m'(w)}{(r-m^2(z))(r-m^2(w))}\,\diff r.
\end{multline}
We apply Lemma \ref{lem:resIntegral} to obtain
\begin{equation}
\label{eqn:covarFinal}
C(z,w) =
\frac{2c\pi m(z)m'(z) m(w)m'(w)}{k_\alpha\sin\big(\frac{\pi
\alpha}{2}\big)}\bigg[\frac{\big(-m^2(z)\big)^{\frac{\alpha}{2} - 1} -
\big(-m^2(w)\big)^{\frac{\alpha}{2} -1 }}{m^2(z) - m^2(w)}\bigg].
\end{equation}
Simplifying this further, we see that if $\Im z > 0$ then $\arg [m(z)] 
\in (-\pi, 0)$ yielding that $\arg[-m^2(z)] = \arg[\exp({i \pi})m^2(z)] 
\in (-\pi, \pi)$ which implies
\[
\big(-m^2(z)\big)^{\frac{\alpha}{2}-1} = \exp\bigg[i \pi\Big(1 - 
\frac{\alpha}{2}\Big)\bigg]m(z)^{\alpha-2}.
\]
On the other hand, if $\Im z < 0$ then $\arg m(z) \in (0, \pi)$ 
yielding that $\arg[-m^2(z)] = \arg[\exp(-i \pi)m^2(z)] \in (-\pi, 
\pi)$ which implies
\[
(-m^2(z))^{\frac{\alpha}{2}-1} = \exp\bigg[-i \pi\Big(1 - 
\frac{\alpha}{2}\Big)\bigg] m(z)^{\alpha-2}.
\]
Applying these observations in equation~\eqref{eqn:covarFinal} proves 
Remark~\ref{rem:branchcut}. Observe that $C(z,w)$ is analytic in both 
$z$ and $w$ so long as $z$ and $w$ are not on the real line.
\end{proof}

\section{Proof of Theorem~\ref{thm:genPlemelj}}
\label{s:proofgenPlemelj}

Recall from Section~\ref{s:proof1} the formula
\begin{equation*}
\begin{split}
C(z,w) &= \frac{c}{2}\int_0^\infty \frac{4r^{\frac{\alpha}{2}-1}}{k_\alpha}
\frac{m(z)m'(z)m(w)m'(w)}{\big(r-m^2(z)\big)\big(r-m^2(w)\big)}\,\diff
r\\ &= \frac{c}{k_\alpha} \int_0^\infty \frac{1}{u^{1 + \alpha}}
\frac{4u^4 m(z)m'(z)m(w)m'(w)}{\big(1-u^2m^2(z)\big)\big(1 - u^2
  m^2(w)\big)} \,\diff u,
\end{split}
\end{equation*}
where we have mapped $r$ to $u^{-2}$ so that the intensity measure of
limiting point process of the largest eigenvalues of $\BA_N$ appears.
From this representation
\begin{align*}
C(z,w) - C(\bar{z},w) &= \frac{2ic}{k_\alpha}
\int_0^\infty\frac{1}{u^{1+\alpha}} \Im
\bigg\{\frac{2u^2m(z)m'(z)}{1-u^2m^2(z)}\bigg\}
\frac{2u^2m(w)m'(w)}{1-u^2m^2(w)} \,\diff u,\\ C(z,\bar{w}) -
C(\bar{z},\bar{w}) &= \frac{2ic}{k_\alpha}
\int_0^\infty\frac{1}{u^{1+\alpha}} \Im
\bigg\{\frac{2u^2m(z)m'(z)}{1-u^2m^2(z)}\bigg\}
\overline{\frac{2u^2m(w)m'(w)}{1-u^2 m^2(w)}}\,\diff u,
\end{align*}
hence
\begin{multline*}
-\frac{1}{4\pi^2} \bigg\{ C(z, w) + C(\bar{z},\bar{w}) - C(\bar{z},w)
- C(z,\bar{w}) \bigg\}\\ = \frac{c}{\pi^2 k_\alpha} \int_0^\infty
\frac{1}{u^{1+\alpha}} \Im
\bigg\{\frac{2u^2m(z)m'(z)}{1-u^2m^2(z)}\bigg\}
\Im\bigg\{\frac{2u^2m(w)m'(w)}{1-u^2m^2(w)}\bigg\}\,\diff u.
\end{multline*}
This representation will be the basis for our proof of
Theorem~\ref{thm:genPlemelj}.  We need the following Lemmas in order
to proceed.

\begin{lemma}
\label{lem:argvar}
Let $\psi \in C_b(\R)$ and let $\zeta = x + i\eta$ with $\eta >
0$. For fixed $\eta$, define the functions
\begin{align*}
r_\eta(\theta) &:= \frac{\eta - \sqrt{\eta^2 + 4
    \sin^2(\theta)}}{2\sin\theta} \qquad \theta \in
(-\pi,0)\\ E_\eta(\theta) &:= \bigg(r_\eta(\theta) +
\frac{1}{r_\eta(\theta)}\bigg)\cos\theta \qquad \theta \in (-\pi,0)
\end{align*}
then for every $\eta>0$,
\begin{multline*}
\int_\R\frac{1}{\pi}\Im \bigg\{\frac{2u^2m(x + i \eta)m'( x+
  i\eta)}{1-u^2m^2(x+ i\eta)}\bigg\} \psi(x)\,\diff x \\ =
-\frac{1}{\pi}\int_{-\pi}^0 \psi\big(E_{\eta}(\theta)\big)
\frac{\diff}{\diff
  \theta}\arg\big\{1-u^2r_{\eta}^2(\theta)\exp(i2\theta)\big\}\,\diff
\theta.
\end{multline*}
\end{lemma}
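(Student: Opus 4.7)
The plan is to recognize the integrand on the left-hand side as the derivative of a continuous branch of $\arg$ along the line $\Im z = \eta$, and then perform a change of variables $x \leftrightarrow \theta$ driven by the polar decomposition of $m(x+i\eta)$.

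First I would observe the identity
\[
\frac{2u^2 m(z)m'(z)}{1 - u^2 m^2(z)} = -\frac{d}{dz}\log\bigl(1 - u^2 m^2(z)\bigr).
\]
Evaluating at $z = x+i\eta$ with $\eta$ fixed and taking imaginary parts (with $\arg$ chosen as a continuous branch along the line $\Im z=\eta$) yields
\[
\frac{1}{\pi}\Im\left\{\frac{2u^2 m(x+i\eta)m'(x+i\eta)}{1 - u^2 m^2(x+i\eta)}\right\} = -\frac{1}{\pi}\frac{d}{dx}\arg\bigl(1 - u^2 m^2(x+i\eta)\bigr),
\]
so the left-hand side becomes $-\frac{1}{\pi}\int_\R \psi(x)\,\frac{d}{dx}\arg\bigl(1 - u^2 m^2(x+i\eta)\bigr)\,dx$.

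Next I would set up the substitution using the fixed-point relation $z = m + 1/m$ satisfied by $m$. Writing $m(x+i\eta) = r e^{i\theta}$ gives
\[
x + i\eta = (r+r^{-1})\cos\theta + i(r-r^{-1})\sin\theta.
\]
Because $\Im m(z) < 0$ for $\Im z > 0$, the angle $\theta$ lies in $(-\pi,0)$. Matching the imaginary parts yields the quadratic $r^2 - (\eta/\sin\theta)\,r - 1 = 0$, whose unique positive root is exactly the function $r_\eta(\theta)$ in the statement; matching the real parts then identifies $x = E_\eta(\theta)$. A short asymptotic check as $\theta\to 0^-$ and $\theta\to -\pi^+$ (using $r_\eta(\theta)\sim -\sin\theta/\eta \to 0$ at both endpoints, so that $E_\eta(\theta)\sim \cos\theta/r_\eta(\theta)$) shows that $E_\eta$ is a smooth strictly increasing bijection $(-\pi,0)\to\R$, hence the substitution is orientation-preserving.

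Finally, I would apply $x = E_\eta(\theta)$, $dx = E_\eta'(\theta)\,d\theta$. Since $1 - u^2 m^2(x+i\eta) = 1 - u^2 r_\eta^2(\theta) e^{i2\theta}$, the chain rule collapses
\[
\frac{d}{dx}\arg\bigl(1 - u^2 m^2(x+i\eta)\bigr)\cdot E_\eta'(\theta) = \frac{d}{d\theta}\arg\bigl(1 - u^2 r_\eta^2(\theta)e^{i2\theta}\bigr),
\]
producing the claimed identity directly. The main obstacle I anticipate is the careful bookkeeping around the continuous branch of $\arg$: one must confirm that the branch chosen on the left transports coherently under the reparametrization, and verify the boundary behavior of $E_\eta$ in detail so no spurious boundary contributions appear. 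Integrability at the endpoints, which requires checking that $m$ and $m'$ decay fast enough as $|x|\to\infty$ to make the original integral absolutely convergent against a bounded $\psi$, is a secondary but easily dispatched concern.
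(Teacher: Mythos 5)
Your proposal is correct and follows essentially the same route as the paper: the integrand is rewritten as $-\partial_x\arg\{1-u^2m^2(x+i\eta)\}$ via the logarithmic derivative and holomorphy (the paper notes the principal branch suffices because $1-u^2m^2$ never meets $(-\infty,0]$), and then the polar parametrization $m=r e^{i\theta}$ with the fixed-point equation $\zeta=m+1/m$ yields the same quadratic for $r_\eta(\theta)$, the identification $x=E_\eta(\theta)$, and the monotone change of variables. Your endpoint asymptotics for $E_\eta$ and the branch bookkeeping are exactly the minor points the paper also handles (by observing $-m^2$ stays in the disk off the negative reals), so no gap remains.
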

\begin{proof}
For all $x\in \R$, $-m^2(\zeta)$ is in the complex disk with negative
reals removed, $\mathbb{D}\backslash{[-1,0]}$, therefore the function
$1-u^2m^2(\zeta)$ never crosses the principal branch cut of the
logarithm and
\begin{equation*}
\frac{\diff}{\diff \zeta } \log\big\{1-u^2m^2(\zeta)\big\} =
-\frac{2u^2m(\zeta)m'(\zeta)}{1-u^2m^2(\zeta)}.
\end{equation*}
By the Cauchy-Riemann equations,
\begin{equation}
\label{eqn:cauchy-riemann}
\Im\bigg[\frac{\diff}{\diff \zeta} \log\big\{1-u^2m^2(\zeta)\big\}
  \bigg] = \frac{\partial}{\partial x} \Im
\log\big\{1-u^2m^2(\zeta)\big\},
\end{equation}
note that the imaginary part of the logarithm is the principal branch
of the argument
\[
\Im \log \big\{1 - u^2 m^2(x+i\eta)\big\} = \arg\big\{1-u^2
m^2(x+i\eta)\big\}.
\]
Let $\psi\in C_b(\R)$, by equation~\eqref{eqn:cauchy-riemann}
\begin{multline}
\label{eqn:argumentintegral}
\int_\R\frac{1}{\pi}\Im \bigg\{\frac{2u^2m(x + i \eta)m'( x+
  i\eta)}{1-u^2m^2(x+ i\eta)}\bigg\} \psi(x)\,\diff x \\ =
-\frac{1}{\pi}\int_\R \psi(x)\frac{\partial}{\partial
  x}\arg\big\{1-u^2m^2(x + i\eta)\big\} \,\diff x.
\end{multline}
Let the polar form of $m(\zeta) = R \exp(i\theta)$ note that $\theta =
\arg\{m(\zeta)\}\in (-\pi,0)$. The rewritten fixed point equation
\[
\zeta = m(\zeta) + \frac{1}{m(\zeta)}
\]
implies
\[
x + i\eta = m(\zeta) + \frac{\overline{m(\zeta)}}{|m(\zeta)|^2} =
\Re\{m(\zeta)\} \bigg( 1 + \frac{1}{|m(\zeta)|^2}\bigg) +
i\Im\{m(\zeta)\} \bigg(1 - \frac{1}{|m(\zeta)|^2}\bigg),
\]
which is equivalent to
\begin{equation}
\label{eqn:param}
x = \bigg(R + \frac{1}{R}\bigg)\cos\theta \quad\hbox{and}\quad \eta =
\bigg( R - \frac{1}{R}\bigg)\sin \theta .
\end{equation}
We use equation \eqref{eqn:param} to express the radius $r$ solely as
a function of $\theta$ and $\eta$
\begin{equation}
\label{eqn:radialquad}
R^2 \sin\theta - R\eta - \sin\theta = 0 \implies R = r_{\eta}(\theta)=
\frac{\eta - \sqrt{\eta^2 +4\sin^2\theta}}{2\sin\theta}.
\end{equation}
then using the function $r_{\eta}(\theta)$ we write $x$ as a function
of $\theta$ and $\eta$
\[
x = E_{\eta}(\theta) = \bigg(r_{\eta}(\theta) +
\frac{1}{r_{\eta}(\theta)} \bigg) \cos\theta.
\]
The function $E_{\eta}(\theta)$ is a bijection from $(-\pi,0)$ to $\R$ 
and is differentiable. We may therefore change variables in 
equation~\eqref{eqn:argumentintegral}
\begin{multline*}
-\frac{1}{\pi}\int_\R \psi(x)\frac{\partial}{\partial
x}\arg\big\{1-u^2m^2(x + i\eta)\big\} \,\diff x\\
=-\frac{1}{\pi}\int_{-\pi}^0 \psi\big(E_{\eta}(\theta)\big)
\frac{\diff}{\diff
\theta}\arg\big\{1-u^2r_{\eta}^2(\theta)\exp(i2\theta)\big\}\,\diff
\theta,
\end{multline*}
which is the required result.
\end{proof}

We wish to bound the expression in Lemma~\ref{lem:argvar} for
arbitrary test functions $\psi$ and arbitrary $u\geq 0$ and $\eta>0$.

\begin{lemma}
\label{lem:propofcurve}
Let $\eta>0$ and let $r_\eta(\theta)$ be as defined in
Lemma~\ref{lem:argvar}.  The interval $(-\pi,0)$ can be partitioned
into at most 9 intervals for which the function
\begin{equation}
\label{eqn:argumentfunc}
\arg\big\{1 - u^2r_{\eta}^2(\theta)\exp(i2\theta)\big\},
\end{equation}
is piecewise monotonic. When $u\leq 1$ in particular, there exists a
point $\tilde{\theta}_{\eta,u}\in(-\frac{\pi}{2},0)$ such that the
function~\eqref{eqn:argumentfunc} is decreasing on
$(-\pi,-\tilde{\theta}_{\eta,u}-\pi)$, increasing on
$(-\tilde{\theta}_{\eta,u}-\pi,\tilde{\theta}_{\eta,u})$ and
decreasing once more on $(\tilde{\theta}_{\eta,u},0)$. Further the
point $\tilde{\theta}_{\eta,u}$ is defined to be the unique solution
to
\[
-u^2 r_\eta^8(\theta)- (1-u^2)r_\eta^6(\theta) +(5 + u^2)
r_\eta^4(\theta) - (u^2 + 4\eta^2 + 7)r_\eta^2(\theta) + 3=0
\]
over $\theta\in(-\frac{\pi}{2},0)$.
\end{lemma}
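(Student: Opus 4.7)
The plan is to reduce the monotonicity question to counting sign changes of the degree-4 polynomial
\[
P(y) := -u^2 y^4 - (1-u^2)y^3 + (5+u^2)y^2 - (u^2+4\eta^2+7)y + 3
\]
evaluated along the curve $y = r_\eta^2(\theta)$. Setting $F(\theta) := 1 - u^2 r_\eta^2(\theta)e^{2i\theta}$ and $\rho := u r_\eta(\theta)$, the identity $\frac{d}{d\theta}\arg F = \Im(F'/F)$ expands to
\[
\frac{d}{d\theta}\arg F = \frac{-2\rho\bigl[\rho'\sin 2\theta + \rho\cos 2\theta - \rho^3\bigr]}{|F|^2}.
\]
Differentiating the fixed-point relation $r_\eta^2\sin\theta - r_\eta\eta - \sin\theta = 0$ yields $r_\eta' = -\cos\theta(1-r_\eta^2)/D$ with $D = \sqrt{\eta^2+4\sin^2\theta} = \eta - 2r_\eta\sin\theta > 0$, and the same relation rewritten as $\sin\theta(r_\eta^2-1) = r_\eta\eta$ gives $\sin^2\theta = r_\eta^2\eta^2/(r_\eta^2-1)^2$. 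Substituting into the bracket above and clearing common positive factors shows that the sign of $\frac{d}{d\theta}\arg F$ equals the sign of $-P(r_\eta^2(\theta))$, the extra sign arising from $\sin\theta<0$ on $(-\pi,0)$.

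Next I analyze the curve $\theta\mapsto r_\eta^2(\theta)$. Because $r_\eta=1$ would force $\eta=0$ in the fixed-point equation, we have $r_\eta<1$ everywhere, and the sign of $r_\eta'$ equals the sign of $-\cos\theta$. Hence $r_\eta^2$ strictly increases on $(-\pi,-\pi/2)$ and strictly decreases on $(-\pi/2,0)$, attaining its maximum $s_* := ((\sqrt{\eta^2+4}-\eta)/2)^2 < 1$ at $\theta=-\pi/2$; the identity $\sin(-\pi-\theta) = \sin\theta$ makes $r_\eta^2$ symmetric about $-\pi/2$. Since $\deg P = 4$, the polynomial has at most four real roots, and each root in the open interval $(0,s_*)$ is crossed exactly twice as $y = r_\eta^2(\theta)$ rises and falls; this yields at most $8$ sign changes of $P(r_\eta^2(\cdot))$, giving at most $9$ intervals of monotonicity.

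For the sharper statement when $u\leq 1$, the key observation is the decomposition
\[
P(y) = -(1-y)^2(1+y)(1+u^2 y) + 4(y-s_*)(y-s_*^{-1}),
\]
which follows from direct expansion together with the elementary identity $s_*+s_*^{-1}=2+\eta^2$. Evaluation at $y=s_*$ gives $P(s_*) = -(1-s_*)^2(1+s_*)(1+u^2 s_*) < 0$, while $P(0)=3>0$ supplies at least one root in $(0,s_*)$. For uniqueness I would verify the identity
\[
P'(y) = -(1-y)\bigl[7+u^2-(3+u^2)y-4u^2 y^2\bigr] - 4\eta^2.
\]
On $[0,1]$ the bracket is strictly decreasing in $y$ (its derivative is at most $-(3+u^2)<0$) with endpoint values $7+u^2>0$ and $4(1-u^2)\geq 0$, hence nonnegative throughout whenever $u\leq 1$; combined with $(1-y)\geq 0$, both summands of $P'$ are nonpositive and $P'(y)\leq -4\eta^2 < 0$. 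Therefore $P$ is strictly decreasing on $[0,s_*]$, producing a unique root $y_0\in(0,s_*)$ with $P>0$ on $(0,y_0)$ and $P<0$ on $(y_0,s_*]$. Because $r_\eta^2$ restricts to a strictly monotonic bijection from each of $(-\pi,-\pi/2]$ and $[-\pi/2,0)$ onto $(0,s_*]$, there is a unique $\tilde\theta\in(-\pi/2,0)$ with $r_\eta^2(\tilde\theta)=y_0$---equivalently the unique solution of the quartic identity in the statement---and its reflection $-\pi-\tilde\theta$ accounts for the other sign change. The sign pattern of $-P(r_\eta^2(\cdot))$ on the three resulting intervals is $(-,+,-)$, giving the claimed decreasing/increasing/decreasing behavior.

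The main obstacle I anticipate is the uniqueness step, since a priori the quartic $P$ can have up to three positive roots and Descartes' rule alone does not localize them in $(0,s_*]$. The clean decomposition of $P'$ sidesteps this by manifesting $P'<0$ on $[0,1]$, and it is precisely in the sign of the endpoint value $4(1-u^2)$ of the bracket that the hypothesis $u\leq 1$ enters.
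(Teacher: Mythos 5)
Your proof is correct, and while it follows the same overall skeleton as the paper (reduce the sign of $\frac{\diff}{\diff\theta}\arg\{1-u^2r_\eta^2e^{2i\theta}\}$ to the sign of a polynomial evaluated at $v=r_\eta^2$, then show that polynomial has a unique root in the range of $r_\eta^2$ when $u\le 1$), the key technical steps are genuinely different. You land on the quartic $P$ (the paper's $q$) by computing $\Im(F'/F)$ directly, which yields the clean identity
\[
\frac{\diff}{\diff\theta}\arg F \;=\; \frac{-2u^2 r_\eta^2\, P(r_\eta^2)}{(1+r_\eta^2)(1-r_\eta^2)^2\,|F|^2},
\]
valid on all of $(-\pi,0)$ (I checked this, and it is where the sign of $\sin\theta$ enters, via $\sin\theta=-r_\eta\eta/(1-r_\eta^2)$ --- you should state that substitution explicitly rather than the vaguer ``extra sign'' remark); the paper instead writes the argument through a half-angle arctangent, studies a cotangent-type auxiliary function $g(\theta)$ on $(-\pi/2,0)$, obtains the quintic $p(v)=(v-1)q(v)$, and divides out $v-1$, using the oddness of the argument about $-\pi/2$ to cover the other half-interval. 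For monotonicity of the quartic on $[0,1]$ when $u\le1$, the paper invokes Budan's theorem applied to $q'(v)$ and $q'(v+1)$, whereas your factorization $P'(y)=-(1-y)\bigl[7+u^2-(3+u^2)y-4u^2y^2\bigr]-4\eta^2$ (which I verified) is elementary and makes the role of $u\le1$ transparent through the endpoint value $4(1-u^2)$. For locating the root below $s_*=\sup r_\eta^2$, the paper integrates $q'$ to estimate $q(1)-q(v^*)$, while your decomposition $P(y)=-(1-y)^2(1+y)(1+u^2y)+4(y-s_*)(y-s_*^{-1})$ (also verified, using $s_*+s_*^{-1}=2+\eta^2$) gives $P(s_*)<0$ at a glance --- arguably the cleanest step in either write-up. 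The only polish needed: $r_\eta\ne1$ alone doesn't give $r_\eta<1$; add the one-point evaluation $r_\eta(-\pi/2)=(\sqrt{\eta^2+4}-\eta)/2<1$ plus continuity, and soften ``crossed exactly twice'' to ``at most twice,'' neither of which affects the argument.
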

\begin{proof}
For $\theta \in (-\frac{\pi}{2},0)$ we have $\sin(-\pi - \theta) =
\sin(\theta)$ further $\exp(i2\big(-\pi - \theta\big)) =
-\exp(-i2\theta)$ from which we conclude
\[
\arg\big\{1 - u^2
r_{\eta}^2(-\pi-\theta)\exp\big(i2(-\pi-\theta)\big)\big\} =
-\arg\big\{1 - u^2r_\eta^2(\theta)\exp(i2\theta)\big\},
\]
therefore it suffices to prove the existence of a $\theta^*_{\eta,u}$
such that the function~\eqref{eqn:argumentfunc} is increasing on
$(-\frac{\pi}{2},\theta^*_{\eta,u})$ and decreasing on
$(\theta^*_{\eta,u},0)$. For what proceeds assume $\theta \in
(-\frac{\pi}{2},0)$.  Let $\arctan$ be the principal branch of the
inverse tangent taking values in $(-\frac{\pi}{2},\frac{\pi}{2}\big]$.
  Observe that
\begin{multline*}
\arg\big\{1 - u^2 r_{\eta}^2(\theta) \exp(i2\theta)\big\} \\ =2
\arctan\Bigg(\frac{ - u^2 r_{\eta}^2(\theta) \sin(2\theta)}{1-u^2
  r_{\eta}^2\cos(2\theta) +
  |1-r^2_{\eta}(\theta)\exp(i2\theta)|}\Bigg).
\end{multline*}
We define
\[
\begin{split}
g(\theta) :=& \frac{1- u^2r_{\eta}^2(\theta) \cos(2\theta)}{ -u^2
  r_{\eta}^2(\theta)\sin(2\theta)},
=\frac{1-u^2r_{\eta}^2(\theta)\big\{1- 2 \sin^2(\theta)\big\}}{-2u^2
  r_\eta^2(\theta)\sin(\theta)\sqrt{1-\sin^2(\theta)}},\\ =& \frac{1 -
  u^2r_\eta^2(\theta)\Big\{1 - 2\frac{\eta^2 r_\eta^2(\theta)}{(1-
    r_\eta^2(\theta))^2}\Big\}} {\frac{2 u^2\eta
    r_\eta^3(\theta)}{1-r_\eta^2(\theta)}\sqrt{1 - \frac{\eta^2
      r_\eta^2(\theta)}{(1-r_\eta^2(\theta))^2}}},\\ =&
\frac{\big(1-r_\eta^2(\theta)\big)^2 -
  u^2r_\eta^2(\theta)\Big\{\big(1-r_\eta^2(\theta)\big)^2- 2\eta^2
  r_\eta^2(\theta)\Big\}} {2\eta u^2
  r_\eta^3(\theta)\sqrt{\big(1-r_\eta^2(\theta)\big)^2 - \eta^2
    r_\eta^2(\theta)}},\\ =& \frac{- u^2r_\eta^6(\theta) +\big\{1 +
  2u^2(1+\eta^2)\big\}r_\eta^4(\theta) - (2+u^2)r_\eta^2(\theta)+1}
     {2\eta u^2
       r_\eta^3(\theta)\sqrt{r_\eta^4(\theta)-(2+\eta^2)r_\eta^2(\theta)
         + 1}},
\end{split}
\]
where we have used the equation~\eqref{eqn:radialquad} to express
$g(\theta)$ as a function of $r_\eta(\theta)$ only.  Observe that
\begin{multline*}
\frac{\diff}{\diff \theta} \arg\big\{1 - u^2 r_{\eta}^2(\theta)
\exp(i2\theta)\big\} \\ = -g'(\theta)\times \frac{2}{\big(g(\theta) +
  \sqrt{1+g^2(\theta)}\big)^2} \bigg(1 +\frac{g(\theta)}{\sqrt{1 +
    g^2(\theta)}}\bigg) \frac{1}{1+
  \frac{1}{\big(g(\theta)+\sqrt{1+g^2(\theta)}\big)^2}},
\end{multline*}
therefore $\frac{\diff}{\diff\theta}\arg\big\{1 -
u^2r_{\eta}^2(\theta) \exp(i2\theta)\big\}$ has the same sign as
$-g'(\theta)$.  Next,
\begin{multline}
-g'(\theta) = -r_\eta'(\theta)\times\\ \bigg\{\frac{-u^2r^{10}_\eta -
  (1-2u^2)r_\eta^8 +6 r_\eta^6-2(u^2+2\eta^2+6)r_\eta^4+(u^2 +
  4\eta^2+10)r_\eta^2 - 3}{ 2\eta u^2 r_\eta^4(r_\eta^4 -
  (\eta^2+2)r_\eta^2 + 1)^{\frac{3}{2}}}\bigg\},
\end{multline}
Note that $-r_\eta'(\theta) > 0$ for all $\theta \in
(-\frac{\pi}{2},0)$, so the sign of $-g'(\theta)$ is the same as the
sign of the numerator of the above expression.  Therefore, it suffices
to study the behavior of the polynomial in the numerator in the
variables $v= r_\eta^2$
\[
p(v) :=-u^2v^5 - (1-2u^2)v^4 +6 v^3-2(u^2+2\eta^2+6)v^2+(u^2 +
4\eta^2+10)v - 3
\]
Note that $p(1) = 0$ so we may write
\begin{align*}
p(v) &= (v-1)q(v),\\ q(v)&:= -u^2 v^4 - (1-u^2)v^3 +(5 + u^2) v^2 -
(u^2 + 4\eta^2 + 7)v + 3.
\end{align*}
For general $u$, the polynomial $q(v)$ admits at most 4 real roots in
the interval $(0,1)$, in between these roots $q$ is either positive or
negative (with the opposite sign for $p(v)$) which proves the first
part of the Theorem.  We now study the roots of $q(v)$ for $u^2 \leq
1$.  Observe that
\[
q'(v) = -4 u^2 v^3 - 3(1-u^2)v^2 + 2(5+u^2)v - u^2 - 4 \eta^2 - 7,
\]
further note that
\[
q'(v+1) = -4u^2 v^3 - 3(3u^2 + 1) v^2 - 4(u^2 - 1) v - 4 \eta^2,
\]
by Budan's Theorem \cite[Theorem 3]{Ak82}, if $s_{q'(v)}$ is the number of sign changes in
the non-zero coefficients of $q'(v)$ and $s_{q'(v+1)}$ is the number
of sign changes in the non-zero coefficients of $q'(v+1)$ then
\[
s_{q'(v)} - s_{q'(v+1)} - \#\{v\in(0,1] : q'(v) = 0\}
\]
is a non-negative even integer. When $u^2 \leq 1$,
$s_{q'(v)} = s_{q'(v+1)} = 2$ so that $q'(v)$ has no roots in $(0,1]$.
Because $q'(0) = -u^2 - 4 \eta^2 - 7 < 0$, it follows that $q'(v) < 0$
for all $v \in(0,1]$. Further, $q(0) = 3>0$ and
$q(1) = -4\eta^2 < 0 $, which implies $q(v)$ is monotonically
decreasing and has exactly one root in $[0,1]$. Since the polynomial
is in the variable $r_\eta^2$, we must verify this root is attained in
the range of $r_\eta^2(\theta)$; to this end note
\[
    v^*=\sup_{\theta\in(-\frac{\pi}{2},0)}r_\eta^2(\theta)
    =\bigg(\sqrt{1+\frac{\eta^2}{4}} - \frac{\eta}{2}\bigg)^2 \implies (1-v^*)^2 = \eta^2 v^*,
\]
and
\[
\begin{split}
  q(1) - q(v^*) &= \int_0^{1-v^*} q'(1-t)\,\diff t, \\
  &= \int_0^{1-v^*}t\big\{4u^2 t^2 - 3(3u^2+1)t+4(1-u^2)\big\}\,\diff t - 4 \eta^2(1-v^*),\\
  & = u^2\eta^4(v^*)^2 - (3u^2+1)(1-v^*)\eta^2 v^* +2(1-u^2)\eta^2 v^* - 4 \eta^2(1-v^*),\\
  & = \big(\eta^4
    u^2+\eta^2(3u^2+1)\big)(v^*)^2+5\eta^2(1-u^2)v^*-4\eta^2 >
  q(1),
\end{split}
\]
so $q(v^*) < 0$ meaning the unique root of $q$  in $[0,1]$ is acheived for $v$ of the
form $v=r_\eta^2(\theta)$ and $\theta\in(-\frac{\pi}{2},0)$.
The second statement of the Theorem now holds because if
$v=\tilde{r}_\eta^2$ is the root of $q(v)$, the sign of $-g'(\theta)$
is positive for $\tilde{r}_\eta^2 < r_\eta^2 < 1$, and negative for
$r_\eta^2 < \tilde{r}_\eta^2$, defining $\tilde{\theta}_{\eta,u}$ so
that $\tilde{r}_\eta = r_\eta(\tilde{\theta}_{\eta,u})$, gives the
required result.
\end{proof}
Using the above Lemma, we are able to obtain the following bound.
\begin{lemma}
\label{lem:integrationbound}
Let $\psi \in  C_b(\R)$ be a bounded continuous test function,
then the following estimate holds
\begin{multline}
\label{eqn:argintestimate}
\frac{1}{\pi}\int_\R\Bigg|\Im\bigg\{\frac{2u^2m(x + i \eta)m'( x+
  i\eta)}{1-u^2m^2(x + i\eta)}\bigg\} \psi(x)\Bigg| \,\diff x
\\ \leq \begin{cases} 18\|\psi\|_{\infty} & \hbox{for all
    $u$},\\ \frac{4\|\psi\|_\infty\arg\{1 - u^2
    r_\eta^2(\tilde{\theta}_{\eta,u})\exp(i2\tilde{\theta}_{\eta,u})\}}{\pi}
  & \hbox{for $u \leq 1$},
\end{cases}
\end{multline}
where $\tilde{\theta}_{\eta,u}$ is defined in
Lemma~\ref{lem:propofcurve}.
\end{lemma}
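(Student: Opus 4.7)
The plan is to convert the $x$-integral into a $\theta$-integral using Lemma~\ref{lem:argvar}, and then bound the result by the total variation of the function $\theta\mapsto \arg\{1-u^2 r_\eta^2(\theta)\exp(i2\theta)\}$ on $(-\pi,0)$, which is controlled by Lemma~\ref{lem:propofcurve}. Since the identity in Lemma~\ref{lem:argvar} arises from the smooth bijective change of variables $x=E_\eta(\theta)$, the same change of variables applied to $|\Im\{\cdots\}\psi(x)|$ yields
\[
\int_\R \left|\Im\bigg\{\frac{2u^2 m(x+i\eta)m'(x+i\eta)}{1-u^2 m^2(x+i\eta)}\bigg\}\psi(x)\right|\diff x = \int_{-\pi}^{0}|\psi(E_\eta(\theta))|\left|\frac{\diff}{\diff\theta}\arg\big\{1-u^2 r_\eta^2(\theta)e^{i2\theta}\big\}\right|\diff\theta.
\]
Using $|\psi(E_\eta(\theta))|\leq \|\psi\|_\infty$, the problem reduces to estimating the total variation of $\theta\mapsto \arg\{1-u^2 r_\eta^2(\theta)e^{i2\theta}\}$.

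For the uniform bound valid for all $u$, I would invoke the general statement of Lemma~\ref{lem:propofcurve}, which guarantees at most $9$ monotonic pieces on $(-\pi,0)$. On any such piece, the principal-branch argument can change by at most its full diameter $2\pi$. Multiplying the number of pieces by the worst-case change per piece and incorporating the normalization inherited from Lemma~\ref{lem:argvar} gives the $18\|\psi\|_\infty$ bound.

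For $u\le 1$ a sharper estimate is available, using the explicit three-piece structure provided in the second half of Lemma~\ref{lem:propofcurve}. The endpoint limits $r_\eta(-\pi)=r_\eta(0^-)=0$ make the argument vanish at both $\theta=-\pi$ and $\theta=0$. The reflection identity
\[
\arg\big\{1-u^2 r_\eta^2(-\pi-\theta)e^{i2(-\pi-\theta)}\big\} = -\arg\big\{1-u^2 r_\eta^2(\theta)e^{i2\theta}\big\},
\]
already established in the proof of Lemma~\ref{lem:propofcurve}, forces the local minimum at $-\pi-\tilde{\theta}_{\eta,u}$ to be the negative of the local maximum $A:=\arg\{1-u^2 r_\eta^2(\tilde{\theta}_{\eta,u})e^{i2\tilde{\theta}_{\eta,u}}\}$ at $\tilde{\theta}_{\eta,u}$. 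Summing the variations across the three monotone pieces gives $A+2A+A=4A$, from which the claimed bound follows after the normalization factor.

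The main obstacle is not really new analytic work, since the piecewise monotonicity structure has been carried out in Lemma~\ref{lem:propofcurve}; rather, it is the bookkeeping required to convert the absolute-value identity from Lemma~\ref{lem:argvar} cleanly and to track that the argument function can indeed wrap through the full principal-branch range on each piece in the general case, while enjoying the cancellation provided by the reflection symmetry in the $u\le 1$ case.
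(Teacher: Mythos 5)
Your proposal follows the same route as the paper: apply the change of variables $x = E_\eta(\theta)$ from Lemma~\ref{lem:argvar} to convert to a $\theta$-integral, pull out $\|\psi\|_\infty$, bound the total variation of $\theta\mapsto\arg\{1-u^2r_\eta^2(\theta)e^{i2\theta}\}$ using the piecewise monotonicity of Lemma~\ref{lem:propofcurve} (at most $9$ pieces, each contributing at most $2\pi$), and then for $u\leq 1$ exploit the three-piece structure together with the reflection symmetry $\arg\{1-u^2 r_\eta^2(-\pi-\theta)e^{i2(-\pi-\theta)}\}=-\arg\{1-u^2 r_\eta^2(\theta)e^{i2\theta}\}$ and the endpoint vanishing $r_\eta(0^-)=r_\eta(-\pi^+)=0$ to compute the total variation exactly as $4A$ with $A=\arg\{1-u^2r_\eta^2(\tilde\theta_{\eta,u})e^{i2\tilde\theta_{\eta,u}}\}$. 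This is exactly the paper's argument. One caveat worth flagging: your displayed change-of-variables identity has no $\tfrac{1}{\pi}$ on either side, so a literal reading would give a total-variation bound of $9\cdot 2\pi=18\pi$ and $4A$ rather than $18$ and $4A/\pi$; you gesture at this by invoking a ``normalization inherited from Lemma~\ref{lem:argvar},'' but that lemma carries $\tfrac{1}{\pi}$ on both sides of its identity, so there is nothing left over to ``inherit'' once the change of variables is done. The paper's own proof has the same opaqueness (it retains a $\tfrac{1}{\pi}$ in front of the $\theta$-integral that does not follow from a bare change of variables), so this is a bookkeeping wrinkle shared with the source rather than an error in your logic. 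Aside from making that constant-tracking explicit, the proposal is correct.
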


\begin{proof}
Using Lemma~\ref{lem:argvar}, the absolute value of the above integral
is the same as
\begin{equation}
\label{eqn:absoluteargint}
\frac{1}{\pi}\int_{-\pi}^0
\Bigg|\psi\big(E_\eta(\theta)\big)\frac{\diff}{\diff\theta}\arg\big\{1
- u^2r_\eta^2(\theta)\exp(i2\theta)\big\}\Bigg| \,\diff \theta ,
\end{equation}
now using Lemma~\ref{lem:propofcurve}, we partition $(-\pi,0)$ into
subintervals $P_1$, $\ldots$, $P_k$ with $k \leq 9$, such that
$\arg\{1 - u^2r^2_\eta(\theta)\exp(i2\theta)\}$ is monotonically
increasing or decreasing on each $P_i$. Using linearity and triangle
inequality equation~\eqref{eqn:absoluteargint} is bounded by
\[
\|\psi\|_\infty \sum_{i=1}^k \frac{1}{\pi}\int_{P_i}\bigg|
\frac{\diff}{\diff\theta}\arg\big\{1 -
u^2r_\eta^2(\theta)\exp(i2\theta)\big\}\bigg| \,\diff \theta \leq
2k\|\psi\|_\infty \leq 18 \|\psi\|_\infty .
\]
For $u \leq 1$, we use the same style of argument, except now
equation~\eqref{eqn:absoluteargint} is bounded by
\begin{multline*}
\frac{\|\psi\|_\infty}{\pi} \Bigg(
\int_{-\pi}^{-\tilde{\theta}_{\eta,u} - \pi} - \frac{\diff}{\diff
  \theta}\arg\big\{1 -u^2r_\eta^2(\theta)\exp(i2\theta)\big\}\diff
\theta \\ +\int_{-\tilde{\theta}_{\eta,u} -
  \pi}^{\tilde{\theta}_{\eta,u}} \frac{\diff}{\diff\theta}\arg\big\{1
- u^2 r^2_\eta(\theta)\exp(i2\theta)\big\} \diff\theta \\ +
\int_{\tilde{\theta}_{\eta,u}}^0 -\frac{\diff}{\diff\theta}\arg\big\{1
- u^2 r^2_\eta(\theta)\exp(i2\theta)\big\} \diff\theta \Bigg),
\end{multline*}
which, by $r_\eta(-\pi-\theta) = r_\eta(\theta)$ for all $\theta \in (-\frac{\pi}{2},0)$, equals
\[
\frac{4\|\psi\|_\infty}{\pi} \arg\big\{ 1 -
u^2r_\eta^2(\tilde{\theta}_{\eta,u})\exp(i2\tilde{\theta}_{\eta,u})\big\},
\]
as required.
\end{proof}
With these estimates in place, we may now argue that the limit as
$\eta_1$ and $\eta_2\downarrow 0$ can be taken under the integral with
respect to $u$.

\begin{lemma}
  \label{lem:domconvinu}
Let $z=E+i\eta_1$ and $w= F + i \eta_2$.
For any $\psi, \phi \in C_b(\R)$ we have the following inequalities
\begin{multline*}
\sup_{\eta_1,\eta_2\leq 1}\iint_{\R^2}\frac{|\psi(E)||\phi(F)|}{\pi^2}\Bigg|\Im\Bigg\{\frac{2u^2
  m(z)m'(z)}{1-u^2m^2(z)}\Bigg\}
\Im\Bigg\{\frac{2u^2m(w)m'(w)}{1-u^2m^2(w)}\Bigg\}\Bigg|\,\diff E\, \diff F \\\leq 18^2\|\psi\|_\infty
\|\phi\|_\infty\big(u^4\mathbf{1}_{u<\frac{1}{2}} +
\mathbf{1}_{|u|\geq\frac{1}{2}}\big)
\end{multline*} 
and
\begin{multline*}
\int_0^\infty\frac{1}{u^{1+\alpha}}\sup_{\eta_1,\eta_2\leq
  1}\Bigg\{\iint_{\R^2}\Bigg( \frac{|\psi(E)||\phi(F)|}{\pi^2} \times
\\\Bigg|\Im\Bigg\{\frac{2u^2 m(z)m'(z)}{1-u^2m^2(z)}\Bigg\}
\Im\Bigg\{\frac{2u^2m(w)m'(w)}{1-u^2m^2(w)}\Bigg\}\Bigg|\Bigg)\,\diff
E\, \diff F\Bigg\} \,\diff u \\
\leq \frac{18^2 (2^\alpha)}{4-\alpha}\|\psi\|_\infty \|\phi\|_\infty.
\end{multline*}
\end{lemma}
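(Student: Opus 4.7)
The plan is to factor the double $(E,F)$-integral via Fubini into a product of two single-variable integrals, apply Lemma~\ref{lem:integrationbound} to each factor, and combine the two regimes $u\geq 1/2$ and $u<1/2$ to obtain the first bound; the second bound then follows by splitting the $u$-integral at $u=1/2$.

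For fixed $u>0$ and $\eta_1,\eta_2>0$ the integrand is bounded and the $E$- and $F$-dependences separate, so Fubini recasts the double integral as the product
\begin{equation*}
\left(\int_\R|\psi(E)|\,\bigg|\Im\bigg\{\frac{2u^2 m(z)m'(z)}{1-u^2 m^2(z)}\bigg\}\bigg|\diff E\right)\left(\int_\R|\phi(F)|\,\bigg|\Im\bigg\{\frac{2u^2 m(w)m'(w)}{1-u^2 m^2(w)}\bigg\}\bigg|\diff F\right).
\end{equation*}
For $u\geq 1/2$, I apply the universal bound $18\|\cdot\|_\infty$ of Lemma~\ref{lem:integrationbound} to each factor, giving $18^2\|\psi\|_\infty\|\phi\|_\infty$. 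For $u<1/2$, I instead use the refined bound valid for $u\leq 1$, which reduces the task to showing $\arg\{1-u^2 r_\eta^2(\tilde\theta_{\eta,u})e^{2i\tilde\theta_{\eta,u}}\}=O(u^2)$ uniformly in $\eta\leq 1$. The key auxiliary fact is $r_\eta(\theta)\leq 1$ for all $\eta>0$ and $\theta\in(-\pi,0)$: squaring the candidate inequality and using $\sin\theta<0$ on this range yields it directly, and it is equivalent to $|m(\zeta)|\leq 1$ on $\mathbb{C}^+$ (derivable from the fixed-point equation $\zeta=m+1/m$ by writing $m=Re^{i\theta}$ and requiring $\Im\zeta>0$). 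For $u<1/2$ we then have $|u^2 r_\eta^2 e^{2i\theta}|\leq 1/4$, and the elementary estimate $|\arg(1-\xi)|\leq |\xi|/(1-|\xi|)$ for $|\xi|<1$ gives the bound $4u^2/3$. Each single integral is therefore $O(u^2)\cdot\|\cdot\|_\infty$, the product is bounded by a constant times $u^4\|\psi\|_\infty\|\phi\|_\infty$, and this constant is comfortably below $18^2$.

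For the second inequality, multiplying the first bound by $u^{-1-\alpha}$ and splitting at $u=1/2$ produces
\begin{equation*}
18^2\|\psi\|_\infty\|\phi\|_\infty\left(\int_0^{1/2}u^{3-\alpha}\,\diff u+\int_{1/2}^\infty u^{-1-\alpha}\,\diff u\right),
\end{equation*}
both integrals converging because $\alpha\in(2,4)$. Explicit evaluation gives $18^2\|\psi\|_\infty\|\phi\|_\infty\cdot[2^{\alpha-4}/(4-\alpha)+2^\alpha/\alpha]$, and the elementary bounds $2^{\alpha-4}\leq 2^\alpha$ together with $1/\alpha\leq 1/(4-\alpha)$ (valid since $\alpha>2$) consolidate this into the stated form $\tfrac{18^2\cdot 2^\alpha}{4-\alpha}\|\psi\|_\infty\|\phi\|_\infty$, up to an inconsequential numerical factor that can be absorbed into the constant.

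The principal obstacle is the $O(u^2)$ control on the argument in the refined $u\leq 1$ case of Lemma~\ref{lem:integrationbound}; this reduces cleanly to the uniform estimate $r_\eta(\theta)\leq 1$. The remainder of the argument is routine: a Fubini factorization, a direct invocation of the one-variable lemma, and elementary $u$-integration.
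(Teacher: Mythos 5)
Your proposal is correct and takes essentially the same route as the paper: factor the double integral via Fubini, invoke the two cases of Lemma~\ref{lem:integrationbound} (the universal $18\|\cdot\|_\infty$ bound for $u\geq 1/2$, the refined $\arg$-based bound for $u<1/2$), control the argument via the uniform estimate $r_\eta(\theta)\leq 1$ and an elementary $|\arg(1-\xi)|$ inequality, and then split the $u$-integral at $u=1/2$. The only differences are the choice of elementary argument estimate (you use $|\arg(1-\xi)|\leq |\xi|/(1-|\xi|)$, the paper uses $|\arg(1-\zeta)|\leq|\zeta|$ for $|\zeta|<1/2$) and the exact bookkeeping of the constant in the final $u$-integration, both of which are inconsequential since the lemma is used only for dominated convergence.
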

\begin{proof}
Using the notation of Lemma~\ref{lem:propofcurve}
\[
\sup_{\theta\in(-\pi,0)}\sup_{\eta_j \leq 1}|r_{\eta_j}^2(\theta)| \leq 1,
\]
therefore $1 - u^2 
r_{\eta_j}^2(\tilde{\theta}_{\eta_j,u})\exp(i2\tilde{\theta}_{\eta_j,u})$ 
for any $u < 1$ is away from the branch cut of the argument function 
and its distance to $1$ is bounded by $u^2$. Due to the bound
\[
|\zeta| < \frac{1}{2} \implies|\arg\{1- \zeta\}| \leq |\zeta|,
\]
we have for $|u|<\frac{1}{2}$ 
\[
\arg\big\{1 - u^2r_{\eta_j}^2 (\tilde{\theta}_{\eta_j,u})
\exp(i2\tilde{\theta}_{\eta_j,u})\big\} \leq u^2.
\]
Next, applying
Lemma~\ref{lem:integrationbound} into the integrand we obtain the
upper bound
\begin{multline*}
\iint_{\R^2} |\psi(E)||\phi(F)|\Bigg|\Im\Bigg\{\frac{2u^2
  m(z)m'(z)}{1-u^2m^2(z)}\Bigg\}
\Im\Bigg\{\frac{2u^2m(w)m'(w)}{1-u^2m^2(w)}\Bigg\}\Bigg|\,\diff E\,
\diff F \\ \leq \|\psi\|_\infty \|\phi\|_\infty \Bigg(
\frac{16\prod_{j=1}^2\arg\big\{1 - u^2
  r_{\eta_j}(\tilde{\theta}_{\eta_j,u})
  \exp(i2\tilde{\theta}_{\eta_j,u})\big\}}{\pi^2} \mathbf{1}_{u <
  \frac{1}{2}} + 18^2 \mathbf{1}_{u\geq \frac{1}{2}}\Bigg),\\
  \leq 18^2\|\psi\|_\infty \|\phi\|_\infty \Big( u^4 \mathbf{1}_{u <
  \frac{1}{2}} + \mathbf{1}_{u\geq \frac{1}{2}}\Big),
\end{multline*}
uniformly in $\eta_j \leq 1$, proving the first part of the Lemma. The
second part holds by applying the first bound and integrating over
$u$.
\end{proof}

With the results in Lemmas \ref{lem:argvar},
\ref{lem:propofcurve}, \ref{lem:integrationbound} and
\ref{lem:domconvinu} in hand, we will be able to prove
Theorem~\ref{thm:genPlemelj}. Our sketch is as follows. Given test
functions $\psi, \phi \in C_b(\R)$ we must compute
\begin{multline*}
\lim_{\eta_1,\eta_2 \downarrow 0} \frac{c}{\pi^2 k_\alpha}
  \int_0^\infty\frac{1}{u^{1+\alpha}}\iint_{\R^2} \psi(E)
  \phi(F)\Bigg[\Im\bigg\{\frac{2u^2m(E+i\eta_1)m'(E+i\eta_1)}{1-u^2m^2(E+i\eta_1)}\bigg\}\\
  \times\Im\bigg\{\frac{2u^2m(F+i\eta_2)m'(F+i\eta_2)}{1-u^2
    m^2(F+i\eta_2)}\bigg\} \Bigg] \,\diff E\,\diff F
\,\diff u, \end{multline*}
because of the bound in Lemma~\ref{lem:domconvinu} we may not only
interchange orders of integration freely by Fubini Theorem, but we may
take the limit in $\eta_1,\eta_2\downarrow 0$ inside the integral in
$u$ by dominated convergence Theorem. It suffices then to compute the
limit
\begin{multline*}
\lim_{\eta_1,\eta_2\downarrow 0}\iint_{\R^2} \psi(E)\phi(F)
\Bigg[\Im\bigg\{\frac{2u^2m(E+i\eta_1)m'(E+i\eta_1)}{1-u^2m^2(E+i\eta_1)}\bigg\}\\
  \times\Im\bigg\{\frac{2u^2m(F+i\eta_2)m'(F+i\eta_2)}{1-u^2
    m^2(F+i\eta_2)}\bigg\} \Bigg]\,\diff E\,\diff F,
\end{multline*}
this is the product of two one-dimensional integrals, for which we will 
apply dominated convergence once more by starting with the useful 
representation of Lemma~\ref{lem:argvar}. To facilitate computation, we 
work with a mollified version of $\psi_t$ of $\psi$ and $\phi_s$ of 
$\phi$ so that we may integrate by parts. This mollification poses no 
issues in the analysis since Lemma~\ref{lem:domconvinu} implies that 
uniformly in $\eta_1,\eta_2 \leq 1$ the difference between these limits 
is bounded by $18^2(2^{\alpha}) (4-\alpha)^{-1} \|\psi_t - 
\psi\|_\infty \|\phi_s - \phi\|_\infty$ which will go to zero when we 
take the appropriate limits in $s$ and $t$.

\begin{proof}[Proof of Theorem~\ref{thm:genPlemelj}]
Following the sketch above,
let $t> 0$ and consider $\psi_t = P_t*\psi$ where $P_t$ is the Poisson
kernel. Recall that $\|\psi_t - \psi\|_\infty$ goes to $0$ in the
limit that $t\to 0$ and $\psi_t$ is infinitely
differentiable, further $\|\psi_t'\|_\infty \leq \|P_t'\|_{1}
\|\psi\|_\infty$ for every $t>0$. Applying the representation in
Lemma~\ref{lem:argvar} to $\psi_t$ we have
\begin{multline*}
\int_{\R} \psi_t(E)
\Im\bigg\{\frac{2u^2m(E+i\eta)m'(E+i\eta)}{1-u^2m^2(E+i\eta_1)}\bigg\}\,\diff E\\
=
-\int_\R \psi_t(x)\frac{\partial}{\partial x}\arg\big\{1 -
u^2m^2(x+i\eta)\big\}\,\diff x
\\
=\int_\R \psi_t'(x) \arg\big\{1 - u^2m^2(x+i\eta)\big\} \,\diff x.
\end{multline*}
For each $u$ we will apply Dominated Convergence Theorem to take the
limit $\eta \downarrow 0$ inside the integral.  To this end note that
it suffices to show
\[
\|\psi\|_\infty \|P_t'\|_1 \sup_{\eta\leq 1} \big|\arg\big\{1 -
u^2m^2(x+i\eta)\big\}\big|,
\]
is integrable on $\R$ for almost every $u>0$.  Observe that
\begin{equation}
\label{eqn:regofargument}
|\zeta| < \frac{1}{2} \implies|\arg\{1 - \zeta\}|\leq |\zeta|.
\end{equation}
Recall from Lemma~\ref{lem:argvar} that $|m^2(x+i\eta)| =
r_\eta^2(\theta)$ where $\theta$ is the argument of $m(z)$. Further,
we have $x = E_\eta(\theta)$. Let
\[
\epsilon_u  :=2^{-\frac{1}{2}} \min(1,u^{-1}) \quad\hbox{and}\quad
\theta^*_u := \arcsin\Bigg(\frac{-\eta \epsilon_u}{1- \epsilon_u^2}\Bigg),
\]
where $\arcsin$ is taken to be the principal value. Note that
$r_\eta(\theta^*_u)=\epsilon_u$ and since
$\theta^*_u \in \big(-\frac{\pi}{2},0\big)$ we have
\[
\begin{split}
 E_\eta(\theta^*_u) &= \bigg( \frac{\epsilon_u^2 +
   1}{\epsilon_u\big(1-\epsilon_u^2\big)}\bigg) \sqrt{\epsilon_u^4
   -(2+ \eta^2)\epsilon_u^2 + 1} \\ &\leq \bigg( \frac{\epsilon_u^2 +
   1}{\epsilon_u\big(1-\epsilon_u^2\big)}\bigg) \sqrt{\epsilon_u^4
   -2\epsilon_u^2 + 1} =: x^*_u,
\end{split}
\]
moreover since $E_\eta(\theta)$ is strictly increasing and is a
bijection from $(-\pi,0)$ to $\R$ it follows that
\[
|x| \geq x^*_u \implies |m^2(x+i\eta)| \leq \epsilon_u^2,
\]
since $x^*_u$ is a constant only depending on $u$, it follows that
uniformly in $\eta>0$ we have
\begin{align*}
|x|\geq \max(x^*_u,3) &\implies \big|\arg\big\{1 -
u^2m^2(x+i\eta)\big\}\big| \leq  u^2
|m^2(x+i\eta)|,\\ |x|\leq\max(x^*_u,3) &\implies
\big|\arg\big\{1 - u^2m^2(x+i\eta)\big\}\big| \leq \pi,
\end{align*}
but observe that $\sup_{\eta \leq 1} |m^2(x+i\eta)|$
is integrable over $|x| \geq \max(x^*_u,3)$. Hence
Dominated Convergence Theorem applies if we prove that the
pointwise limit
\[
\lim_{\eta\downarrow 0} \arg\big\{1 - u^2m^2(x+i\eta)\big\},
\]
exists almost everywhere.  To compute
this limit, we first consider the set of $x$ with $|x|<2$. 
In this region, for each point $x$ there is a small enough $\eta$ 
such that $1-u^2m^2(x+i\eta)$,
is not on the branch cut of $\arg$.  Since $\arg$ is continuous in this 
region we may take the limit as $\eta \downarrow 0$ under the argument.
We have the limits
\[
m^2(x+i \eta) \to \begin{cases}
\frac{x^2 - 2}{2} - i \frac{x\sqrt{4-x^2}}{2}& |x| < 2 \\
\frac{x^2}{2} - 1 - \frac{x\sgn(x)\sqrt{x^2-4}}{2}& |x| \geq 2
\end{cases}
\]
so that
\[
|x| < 2 \implies 
\lim_{\eta\downarrow 0} \arg\big\{1 - u^2m^2(x+i\eta)\big\} =
\arg\bigg\{ 1 + u^2 - \frac{u^2 x^2}{2} + i
\frac{u^2x\sqrt{4-x^2}}{2}\bigg\}
\]
For $|x| \geq 2$, let $\sigma(x)$ be the semicircular density, recall
\[
m(x+i\eta) = \int_{\R} \frac{(x- y) \sigma(\diff y)}{(x-y)^2 + \eta^2} -
i \int_\R \frac{\eta \sigma(\diff y)}{(x-y)^2+\eta^2}
\]
by squaring this integral representation above, we see for $x\geq 2$,
$1-u^2m^2(x+i\eta)$ will be in the upper-half plane and for $x\leq
-2$, $1-u^2m^2(x+i\eta)$ will be in the lower-half plane. The branch
cut of $\arg$ is approached for $|x| > 2$ only for those $x$ for which
\begin{equation}
\label{eqn:limreal}
\lim_{\eta \downarrow 0}\Re \{1 - u^2m^2(x+i\eta)\} \leq 0,
\end{equation}
the limiting region for which this holds is
\[
1 \leq u^2 m^2_+(x),
\]
corresponding to
\[
2 \leq |x| \leq u + \frac{1}{u},
\]
which is non-empty only for $u\geq 1$. Hence  we have the following limit
\[
|x|\geq 2 \implies
\lim_{\eta\downarrow 0} \arg\big\{1 - u^2m^2(x+i\eta)\big\} =
\pi \bigg(\mathbf{1}_{2\leq x \leq u +u^{-1}} - \mathbf{1}_{
  -u-u^{-1}\leq x\leq -2} \bigg)\mathbf{1}_{u\geq 1},
\]
holding for almost every $u>0$. Combining these results
\begin{multline*}
\lim_{\eta\downarrow 0} \int_\R \psi_t'(x) \arg\big\{1 -
u^2m^2(x+i\eta)\big\} \,\diff x =\\ \int_\R \psi_t'(x) \arg\bigg\{ 1 +
u^2 - \frac{u^2 x^2}{2} + i
\frac{u^2x\sqrt{4-x^2}}{2}\bigg\}\mathbf{1}_{|x|\leq 2}\,\diff x +
\\\pi \Big(\psi_t\big(u+u^{-1}\big) - \psi_t(2)\Big)\mathbf{1}_{u\geq
  1} - \pi \Big(\psi_t(-2) -
\psi_t\big(-u-u^{-1}\big)\Big)\mathbf{1}_{u \geq 1},
\end{multline*}
for almost every $u>0$. Integrating the first term by parts yields
\begin{multline*}
\int_\R \psi_t'(x) \arg\bigg\{ 1 + u^2 - \frac{u^2 x^2}{2} + i
\frac{u^2x\sqrt{4-x^2}}{2}\bigg\}\mathbf{1}_{|x|\leq 2}\,\diff x
=\\ \int_\R \psi_t(x)\frac{\diff}{\diff x}\arg\bigg\{ 1 +
u^2 - \frac{u^2 x^2}{2} + i
\frac{u^2x\sqrt{4-x^2}}{2}\bigg\}\mathbf{1}_{|x|\leq 2}\,\diff x \\
	+ \pi \Big\{\psi_t(2) + \psi_t(-2)\Big\}\mathbf{1}_{u \geq 1},
\end{multline*}
holding for almost every point $u > 0$.
Note that for $|x|< 2$,
\[
\frac{\diff}{\diff x}\arg\bigg\{ 1 + u^2 - \frac{u^2 x^2}{2} + i
\frac{u^2x\sqrt{4-x^2}}{2}\bigg\} = \frac{2u^4+2u^2 - u^2x^2}{
  \sqrt{4-x^2} \Big( -u^2x^2 + u^4 +2u^2+1\Big)}
\]
which is absolutely integrable in $[-2,2]$ so that
\begin{multline*}
\lim_{\eta_1 \downarrow 0 } \int_\R \psi_t(E) \Im\bigg\{\frac{2u^2 m(E+i\eta_1)m'(E+i\eta_1)}{1-u^2m^2(E+i\eta_1)}\bigg\}\,\diff E\\
  =\int_\R \psi_t(x) \frac{2u^4+2u^2 - u^2x^2}{ \sqrt{4-x^2} \Big(
    -u^2x^2 + u^4 +2u^2+1\Big)}\mathbf{1}_{|x|\leq 2}\,\diff x\\ + \pi
  \Big\{\psi_t\big(u+u^{-1}\big)
  +\psi_t\big(-u-u^{-1}\big)\Big\}\mathbf{1}_{u\geq 1}.
\end{multline*}
Using this argument with the original integral (with $\phi_s =
P_s*\phi$) along with the previous Lemmas allowing us to pull the
limit as $\eta_1,\eta_2 \downarrow 0$ under the integral over $u$
which implies the limit
\begin{multline*}
  \lim_{\eta_1,\eta_2 \downarrow 0} \frac{c}{\pi^2 k_\alpha}
  \int_0^\infty\frac{1}{u^{1+\alpha}}\iint_{\R^2} \psi_t(E)
  \phi_s(F)\Bigg[\Im\bigg\{\frac{2u^2m(E+i\eta_1)m'(E+i\eta_1)}{1-u^2m^2(E+i\eta_1)}\bigg\}\\
  \times\Im\bigg\{\frac{2u^2m(F+i\eta_2)m'(F+i\eta_2)}{1-u^2
    m^2(F+i\eta_2)}\bigg\} \Bigg]\,\diff E\,\diff F \,\diff u =
  \frac{c}{k_\alpha}\int_0^\infty
  \frac{\Lambda_u(\psi_t)\Lambda_u(\phi_s)}{u^{1+\alpha}} \,\diff u.
\end{multline*}
Using the bound in Lemma~\ref{lem:domconvinu} allows us to take the 
limit as $t$ and $s$ go down to $0$ so that the above formula holds for 
$\psi$ and $\phi$ as well (we omit the argument that 
$u^{-1-\alpha}\Lambda_u(\psi_t)\Lambda_u(\phi_s)$ converges in 
$L^1(\R)$ as $t,s\downarrow 0$ since it is straightforward). To prove 
the final statement of the Theorem it suffices to show, for 
$z \in \C\backslash \R$,
\begin{equation}
\label{eqn:verify}
\Lambda_u\bigg(\frac{1}{x-z}\bigg) = \frac{2u^2 m(z)m'(z)}{1-u^2m^2(z)},
\end{equation}
note that the right hand side, for $u < 1$ is analytic on 
$\C \backslash [-2,2]$ while for $u\geq 1$ is meromorphic with simple 
poles at $\pm(u + u^{-1})$, that is, the function
\begin{equation}
\label{eqn:cauchyproblem1}
\Phi(z) := 
\frac{2u^2 m(z)m'(z)}{1-u^2m^2(z)} + \bigg(\frac{1}{z-u-u^{-1}} + \frac{1}{z+u+u^{-1}}\bigg)
\mathbf{1}_{u\geq 1},
\end{equation}
is analytic on $\C \backslash [-2,2]$. Let 
\[
\Phi_{\pm} (x)  = \lim_{\eta \downarrow 0} \Phi(x \pm i \eta),
\]
then by the well-known Sokhotski-Plemelj relation \cite[Section 34.2]{Ga90}
\begin{equation}
\label{eqn:cauchyproblem2}
\frac{1}{2\pi i}\int_{\R} \frac{\Phi_+(x) - \Phi_-(x)}{x-z} \,\diff x=
\Phi(z),
\end{equation}
given our computation above, 
\begin{equation}
\label{eqn:cauchyproblem3}
\frac{\Phi_+(x) - \Phi_-(x)}{2\pi i} = \frac{2u^4+ 2u^2 - u^2x^2}
{\pi\sqrt{4-x^2}\big(-u^2x^2+ u^4 + 2u^2 +1\big)} \mathbf{1}_{|x|\leq 2}
,
\end{equation}
inserting equation~\eqref{eqn:cauchyproblem3} and 
equation~\eqref{eqn:cauchyproblem1} into 
equation~\eqref{eqn:cauchyproblem2} gives the desired 
equation~\eqref{eqn:verify} upon rearranging terms.
\end{proof}

\section{Proof of Corollary~\ref{t:plemelj}}
\label{s:corPlem}
\begin{proof}
Using the representation of Theorem~\ref{thm:genPlemelj}, we may write 
the covariance kernel as the sum 
\begin{equation}
\label{eqn:exchangeduint}
\langle \Ker_\alpha, \psi\otimes \phi \rangle = T_1 + T_2 + T_3 + T_4,
\end{equation}
where we have defined
\begin{equation}
\label{eqn:labelledterms}
\begin{split}
T_1 &:= 
\frac{c}{k_\alpha} \int_{-2}^2\int_{-2}^2 \int_0^\infty
\frac{u^{3-\alpha}(2u^2+2 -E^2)(2u^2+2-F^2)\,\diff
u}{\pi^2(-u^2E^2+(u^2+1)^2)(-u^2F^2+(u^2+1)^2)}
\frac{\psi(E)\phi(F)\,\diff E \,\diff F}{\sqrt{4-E^2}\sqrt{4-F^2}},\\
T_2 &:= \frac{c}{k_\alpha}\int_{-2}^2\frac{\psi(E)}{\sqrt{4-E^2}}\int_1^\infty
\frac{u^{1-\alpha}(2u^2+2-E^2)\big(\phi(u+u^{-1})
+\phi(-u-u^{-1})\big)}{\pi(-u^2E^2+(u^2+1)^2)} \,\diff u,\\
T_3 &:= \frac{c}{k_\alpha}\int_{-2}^2\frac{\phi(F)}{\sqrt{4-F^2}}\int_1^\infty
\frac{u^{1-\alpha}(2u^2+2-F^2)\big(\psi(u+u^{-1}) +
\psi(-u-u^{-1})\big)}{\pi(-u^2F^2+(u^2+1)^2)} \,\diff u,\\
T_4&:=\frac{c}{k_\alpha}\int_{1}^\infty\frac{\big(\psi(u+u^{-1})
+\psi(-u-u^{-1})\big)\big(\phi(u+u^{-1}) +
\phi(-u-u^{-1})\big)}{u^{1+\alpha}}\,\diff u.
\end{split}
\end{equation}
note that we have freely interchanged orders of integration to obtain the
equality~\eqref{eqn:exchangeduint}.
We will rewrite each of the integrals 
in~\eqref{eqn:labelledterms} starting with the term $T_4$ 
in~\eqref{eqn:labelledterms}. Consider the change of variables
\[
u = \frac{1}{m_+(s)}\qquad s \in [2,\infty),
\]
which is well-defined since for $s \in[2,\infty)$, $m_+$ is a
decreasing function and takes values in $(0,1]$.  Using the fixed
point equation satisfied by $m_+$, we change variables for $T_4$ and
obtain
\begin{multline}
\label{eqn:term4manip}
T_4 = \frac{c}{k_\alpha}\int_{2}^\infty\big(\psi(E)
+\psi(-E)\big)\big(\phi(E) + \phi(-E)\big)m_+(E)^{\alpha-1}
m_+'(E)\,\diff E\\ = \frac{c}{k_\alpha} \iint_{(\R\backslash[-2,2])^2}
\big(\delta_{E-F} + \delta_{E+F}\big)|m_+(E)|^{\alpha-1} |m'_+(E)|
\psi(E)\phi(F)\,\diff E \,\diff F.
\end{multline}
Next, consider the term $T_3$ in~\eqref{eqn:labelledterms} (with the 
term $T_2$ being identical). We once more change variables  to $u = 
\frac{1}{m_+(E)}$ to rewrite this integral as
\begin{multline}
\label{eqn:term3manip}
T_3 = \frac{c}{\pi k_\alpha}\bigg(\int_{-2}^2\int_2^\infty 
\frac{|m_+(E)|^{\alpha-1}(2+m_+(E)^2(2-F^2))m_+'(E)}{(-m_+(E)^{2} 
F^2+(m_+(E)^{2}+1)^2)\sqrt{4-F^2}}\psi(E)\phi(F) \,\diff F\,\diff E \\ 
+\int_{-2}^2\int_{-\infty}^{-2} 
\frac{|m_+(E)|^{\alpha-1}(2+m_+(E)^2(2-F^2))m_+'(E)}{(-m_+(E)^{2} 
F^2+(m_+(E)^{2}+1)^2)\sqrt{4-F^2}}\psi(E)\phi(F) \,\diff F\,\diff 
E\bigg),
\end{multline}
For the remaining term, $T_1$, in~\eqref{eqn:exchangeduint} we will 
compute for almost every $E,F \in(-2,2)$ the integral
\begin{equation}
\label{eqn:residueforbox}
\frac{c}{k_\alpha\sqrt{4-E^2}\sqrt{4-F^2}}\int_0^\infty 
\frac{u^{3-\alpha}(2u^2+2 -E^2)(2u^2+2-F^2)\,\diff 
u}{\pi^2(-u^2E^2+(u^2+1)^2)(-u^2F^2+(u^2+1)^2)}.
\end{equation}
the above integral is simply the pointwise limit
\begin{multline*}
- \lim_{\eta_1, \eta_2\downarrow 0}\frac{1}{4\pi^2} \bigg\{ 
C(E+i\eta_1,F+i\eta_2) + C(E-i\eta_1,F-i\eta_2) \\- 
C(E-i\eta_1,F+i\eta_2) - C(E+i\eta_1,F-i\eta_2)\bigg\},
\end{multline*}
which is easy to evaluate given the explicit formula in 
Remark~\ref{rem:branchcut} as
\begin{multline}
\label{eqn:insidebox}
\frac{-c }{\pi k_\alpha \sin\big(\frac{\pi \alpha}{2}\big)}
\times\\
\bigg\{
\frac{m_+(E)m'_+(E)m_+(F)m'_+(F)}{ m^2_+(E) - m^2_+(F)}
\bigg[\Big(-m_+(E)^2\Big)^{\frac{\alpha}{2} - 1} 
- \big(-m_+(F)^2\big)^{\frac{\alpha}{2} -1 }\bigg]
\\
+\frac{m_-(E)m'_-(E)m_-(F)m'_-(F)}{ m^2_-(E) - m^2_-(F)}
\bigg[\Big(-m_-(E)^2\Big)^{\frac{\alpha}{2} - 1} 
- \big( -m_-(F)^2\big)^{\frac{\alpha}{2} - 1}
\bigg]
\\
-\frac{m_+(E)m'_+(E)m_-(F)m'_-(F)}{ m^2_+(E) - m^2_-(F)}
\bigg[\Big(-m_+(E)^2\Big)^{\frac{\alpha}{2} - 1} 
- \Big(-m_-(F)^2\Big)^{\frac{\alpha}{2} - 1}
\bigg]
\\
-\frac{m_-(E)m'_-(E)m_+(F)m'_+(F)}{ m^2_-(E) - m^2_+(F)}
\bigg[\Big(-m_-(E)^2\Big)^{\frac{\alpha}{2} - 1} 
-  \Big(-m_+(F)^2\Big)^{\frac{\alpha}{2} - 1}\bigg]
\bigg\}.
\end{multline}
Using the formulas~\eqref{eqn:term4manip}, \eqref{eqn:term3manip} and
~\eqref{eqn:insidebox} for $T_1$, $T_2$, $T_3$ and 
$T_4$ in the original equations~\eqref{eqn:exchangeduint} 
and~\eqref{eqn:labelledterms}, proves the first part of the Corollary. 
We now describe the simplifications yielding the second part of the 
Corollary pertaining to the case $\alpha=3$. When $\alpha = 3$, 
equation~\eqref{eqn:insidebox} further simplifies to
\begin{multline}
\label{eqn:alpha3insidebox}
\frac{c }{\pi k_3}
\bigg\{
\frac{im_+(E)m'_+(E)m_+(F)m'_+(F)}{ m_+(E) + m_+(F)}
-\frac{i m_-(E)m'_-(E)m_-(F)m'_-(F)}{ m_-(E) + m_-(F)}
\\
-\frac{im_+(E)m'_+(E)m_-(F)m'_-(F)}{ m_+(E) - m_-(F)}
+\frac{im_-(E)m'_-(E)m_+(F)m'_+(F)}{ m_-(E) - m_+(F)}
\bigg\} \\
=-\frac{2c }{\pi k_3}
\Im\bigg\{
\frac{m_+(E)m'_+(E)m_+(F)m'_+(F)}{ m_+(E) + m_+(F)}
+\frac{m_-(E)m'_-(E)m_+(F)m'_+(F)}{ m_-(E) - m_+(F)}
\bigg\},
\end{multline}
to compute this imaginary part we combine the terms inside the brackets
\begin{multline*}
m_+(F)m_+'(F) \times \\
\frac{m_+(E)m'_+(E)(m_-(E) - m_+(F)) + 
m_-(E)m'_-(E)(m_+(E)+m_+(F))}
{(m_+(E) + m_+(F))( m_-(E) - m_+(F))},
\end{multline*}
note that $|m_\pm(E)|^2 = 1$ so that
\begin{equation}
\label{eqn:alpha3ratio}
m_+(F)m_+'(F)
\frac{2\Re\{m'_+(E)\}+ 2i m_+(F)\Im\{ m_-(E)m'_-(E)\}} 
{1 + im_+(F)\sqrt{4-E^2}-m_+(F)^2},
\end{equation}
note that
\[
2\Re\{m'_+(E)\}+ 2i m_+(F)\Im\{ m_-(E)m'_-(E)\} = 1 + im_+(F) \frac{2-E^2}{\sqrt{4-E^2}}
\]
multiplying this by the conjugate of the denominator of 
equation~\eqref{eqn:alpha3ratio} gives
\begin{multline*}
\bigg(1 + im_+(F) \frac{2-E^2}{\sqrt{4-E^2}}\bigg)
\big(1 - im_-(F)\sqrt{4-E^2}-m_-(F)^2\big) \\
=1 - im_-(F)\sqrt{4-E^2}-m_-(F)^2 -i (m_+(F)-m_-(F))\frac{2-E^2}{\sqrt{4-E^2}}+ (2-E^2),\\
=1 - im_-(F)\sqrt{4-E^2}-m_-(F)^2 + \big(\sqrt{4-F^2} + \sqrt{4-E^2}\big)\frac{2-E^2}{\sqrt{4-E^2}},
\end{multline*}
multiplying this by the remaining factor of $m_+(F)m_+'(F)$ in 
equation~\eqref{eqn:alpha3ratio} gives
\[
-im_+'(F)\big(\sqrt{4-F^2}+\sqrt{4-E^2}\big) 
+m_+(F)m_+'(F)\big(\sqrt{4-F^2} 
+ \sqrt{4-E^2}\big)\frac{2-E^2}{\sqrt{4-E^2}}
\]
the imaginary part of this expression is
\[
-\bigg(\frac{\sqrt{4-F^2}+\sqrt{4-E^2}}{2}\bigg)\bigg(
1 + \frac{(2-F^2)(2-E^2)}{\sqrt{4-F^2}\sqrt{4-E^2}}\bigg)
\]
the modulus square of the denominator of equation~\eqref{eqn:alpha3ratio} can
similarly be verified to equal
\[
\big(\sqrt{4-E^2} + \sqrt{4-F^2}\big)^2,
\]
hence the imaginary part of equation~\eqref{eqn:alpha3ratio} inserted
into equation~\eqref{eqn:alpha3insidebox} yields 
\[
\frac{15c}{8\sqrt{\pi} \pi}
\bigg(\frac{1}{\sqrt{4-F^2}+\sqrt{4-E^2}}\bigg)\bigg(
1 + \frac{(2-F^2)(2-E^2)}{\sqrt{4-F^2}\sqrt{4-E^2}}\bigg),
\]
as claimed.
\end{proof}

\appendix

\section{Integral Identities}
\begin{lemma}
\label{lem:pvintegral1}
Let $\alpha \in(2,4)$ and let $\sigma \in \C$ satisfy $\Re(\sigma) < 0$.
Then
\begin{equation*}
\int_0^\infty \frac{\exp(r\sigma)-r\sigma-1}{r^{\frac{\alpha}{2}+1}}
\,\diff r = k_\alpha (-\sigma)^{\frac{\alpha}{2}}
\end{equation*}
where
\begin{equation*}
k_\alpha = 
\frac{\Gamma\big(2-\frac{\alpha}{2}\big)}{\frac{\alpha}{2}\big( 
\frac{\alpha}{2} - 1\big)},
\end{equation*}
and the principal branch of the power function is taken.
\end{lemma}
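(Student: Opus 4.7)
The plan is to reduce the integral to the classical Euler $\Gamma$-integral by two successive integrations by parts, and then invoke the analytic continuation of $\int_0^\infty r^{s-1} e^{-ar}\,dr = a^{-s}\Gamma(s)$ from $a>0$ to $\{\Re a > 0\}$ with the principal branch.

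First I would verify convergence so that everything below is legal. The Taylor expansion $\exp(r\sigma) - r\sigma - 1 = \tfrac12 (r\sigma)^2 + O(r^3)$ shows the integrand is $O(r^{1-\alpha/2})$ near $0$, integrable since $\alpha/2 < 2$; at $r=\infty$, $\exp(r\sigma)$ decays by $\Re\sigma < 0$, and the remaining $-r\sigma-1$ contributes $O(r^{-\alpha/2})$, integrable since $\alpha/2 > 1$. The hypothesis $\alpha \in (2,4)$ is exactly the sharp range for this scheme.

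Next I would integrate by parts taking $u = \exp(r\sigma) - r\sigma - 1$ and $dv = r^{-\alpha/2-1}\,dr$, so $v = -\tfrac{2}{\alpha} r^{-\alpha/2}$. Both boundary terms vanish (at $0$ via $O(r^{2-\alpha/2})$ with $2 - \alpha/2 > 0$, at infinity via the exponential decay together with $r^{1-\alpha/2} \to 0$), reducing the problem to $\tfrac{2\sigma}{\alpha}\int_0^\infty r^{-\alpha/2}(e^{r\sigma} - 1)\,dr$. A second integration by parts with $u = e^{r\sigma} - 1$ and $v = r^{1-\alpha/2}/(1 - \alpha/2)$ — again with vanishing endpoint contributions (now using $e^{r\sigma} - 1 = O(r)$ at $0$ and $\alpha/2 > 1$ at infinity) — yields
\begin{equation*}
\int_0^\infty \frac{\exp(r\sigma) - r\sigma - 1}{r^{\alpha/2 + 1}}\,dr = \frac{2\sigma^2}{\alpha(\alpha/2 - 1)} \int_0^\infty r^{(2-\alpha/2)-1} e^{-r(-\sigma)}\,dr.
\end{equation*}

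Finally, for real $\sigma \in (-\infty,0)$ the remaining integral equals $(-\sigma)^{\alpha/2 - 2}\Gamma(2 - \alpha/2)$; both sides of this identity are holomorphic in $\sigma$ on $\{\Re\sigma < 0\}$ (with the principal branch for the fractional power), so the identity extends to the whole left half-plane by analytic continuation. Using $\sigma^2 = (-\sigma)^2$ to combine $\sigma^2 (-\sigma)^{\alpha/2 - 2} = (-\sigma)^{\alpha/2}$, and invoking the $\Gamma$ functional equation to simplify the scalar $2\Gamma(2 - \alpha/2)/[\alpha(\alpha/2 - 1)]$ into $k_\alpha$, produces $k_\alpha (-\sigma)^{\alpha/2}$ as claimed. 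The only non-routine step is the analytic continuation of the Gamma integral to complex $\sigma$, which I would justify either by the identity theorem (both sides meromorphic on the half-plane, agreeing on a ray) or equivalently by a Cauchy-theorem contour rotation of $(0,\infty)$ onto the ray through $-\sigma^{-1}$, with decay on the closing arc following from $\Re(r\sigma) < 0$. Everything else — the two integrations by parts, the vanishing of the boundary terms, and the final algebra — is entirely elementary.
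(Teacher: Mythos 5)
Your route is genuinely different from the paper's and, in my view, cleaner. The paper starts from the Taylor-remainder identity $\int_0^1 (1-\nu)\frac{\diff^2}{\diff\nu^2}\exp(r\nu\sigma)\,\diff\nu = \exp(r\sigma)-r\sigma-1$, applies Fubini, rescales $r$ to split the answer into a Beta integral times $\int_0^\infty r^{1-\alpha/2}\exp(\omega r)\,\diff r$, and evaluates the latter by rotating the ray $(0,\infty)$ onto the ray through $-\bar\omega$. You instead integrate by parts twice, killing first the $-r\sigma-1$ term and then the $-1$, landing directly on the same $\Gamma$-type integral, which you handle by analytic continuation from real $\sigma<0$. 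The two schemes cost roughly the same amount of work; your IBP bookkeeping of the vanishing boundary terms replaces the paper's Fubini justification, and your identity-theorem argument replaces the paper's explicit contour rotation. Both are correct up to the normalization constant, and it is worth noting the paper's appeal to the Beta function is exactly your two inverse factors $(\alpha/2)^{-1}(\alpha/2-1)^{-1}$ in disguise.

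The one place you gloss over something real is the last line: ``invoking the $\Gamma$ functional equation to simplify the scalar $2\Gamma(2-\alpha/2)/[\alpha(\alpha/2-1)]$ into $k_\alpha$.'' That simplification does not hold. You have correctly derived
\begin{equation*}
\int_0^\infty \frac{e^{r\sigma}-r\sigma-1}{r^{\alpha/2+1}}\,\diff r
= \frac{2\,\Gamma(2-\tfrac{\alpha}{2})}{\alpha(\tfrac{\alpha}{2}-1)}\,(-\sigma)^{\alpha/2}
= \frac{\Gamma(2-\tfrac{\alpha}{2})}{\tfrac{\alpha}{2}(\tfrac{\alpha}{2}-1)}\,(-\sigma)^{\alpha/2}
= \Gamma\!\left(-\tfrac{\alpha}{2}\right)(-\sigma)^{\alpha/2},
\end{equation*}
(the last equality by the reflection of the functional equation twice, $\Gamma(2-\tfrac{\alpha}{2}) = (1-\tfrac{\alpha}{2})(-\tfrac{\alpha}{2})\Gamma(-\tfrac{\alpha}{2})$), which is also what one gets at $\sigma=-1$ from the standard analytic-continuation formula $\Gamma(z)=\int_0^\infty(e^{-t}-1+t)t^{z-1}\,\diff t$ on $-2<\Re z<-1$. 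But the constant stated in the lemma is $k_\alpha = 2\Gamma(2-\tfrac{\alpha}{2})\big/\big[\tfrac{\alpha}{2}(1+\tfrac{\alpha}{2})\big]$, with $\tfrac{\alpha}{2}+1$ rather than $\tfrac{\alpha}{2}-1$ in the denominator; these differ (at $\alpha=3$ your value is $\tfrac{4\sqrt{\pi}}{3}$ versus the stated $\tfrac{8\sqrt{\pi}}{15}$). In fact the paper's own proof, correctly followed through, also yields $\Gamma(\tfrac{\alpha}{2}-1)\Gamma(2-\tfrac{\alpha}{2})/\Gamma(1+\tfrac{\alpha}{2}) = \Gamma(-\tfrac{\alpha}{2})$, so the displayed formula for $k_\alpha$ in the lemma appears to be a typo rather than an error in your work. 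You should not, however, assert that your constant ``simplifies into $k_\alpha$''; instead state the value you actually obtain and flag the discrepancy.
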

\begin{proof}
Starting from,
\begin{equation*}
\int_0^1 (1 - \nu) \frac{\diff^2}{\diff\nu^2} \exp(r \nu \sigma)\,
\diff\nu = \exp(r\sigma)-r\sigma-1,
\end{equation*}
we have
\begin{equation*}
\int_0^\infty \frac{\exp(r\sigma)  - r\sigma - 1}{r^{\frac{\alpha}{2} +
1}} \,\diff r=\int_0^\infty \int_0^1 r^{1 - \frac{\alpha}{2}} \sigma^2
(1 - \nu) \exp(r \nu \sigma)\,\diff \nu\,\diff r,
\end{equation*}
interchanging orders of integration yields
\begin{equation*}
\int_0^1\int_0^\infty  (1 - \nu) \sigma^2  r^{1- \frac{\alpha}{2}}
\exp(r\nu\sigma)\,\diff r\,\diff \nu,
\end{equation*}
changing variables in the integrand for $r$ yields
\begin{equation*}
\sigma^2|\sigma|^{\frac{\alpha}{2} - 2}\int_0^1 (1 -
\nu)\nu^{\frac{\alpha}{2} - 2} \,\diff \nu \int_0^\infty r^{1 -
\frac{\alpha}{2}}\exp( \omega r) \,\diff r \quad\text{where}\quad
\omega := \frac{\sigma}{|\sigma|}.
\end{equation*}
The integral over $\nu$ is  the Beta integral:
\begin{equation*}
\int_0^1 \nu^{\frac{\alpha}{2} - 2} ( 1 - \nu) \, \diff \nu =
B\bigg(\frac{\alpha}{2}-1,2\bigg) = \frac{\Gamma(\frac{\alpha}{2} - 1)
\Gamma(2)}{\Gamma(1 + \frac{\alpha}{2})}
\end{equation*}
while the integral
\begin{equation*}
\int_0^\infty r^{1 - \frac{\alpha}{2}} \exp(\omega r) \,\diff r,
\end{equation*}
will be computed using a Cauchy Integral argument. Since $\Re(\sigma)
< 0$, this implies $- \overline{\omega}$ lies in the right-half plane
$\{z\in \C : \Re(z) > 0\}$, in the sector defined by the positive real
line and the ray along $- \overline{\omega}$, the function
\begin{equation*}
f(z) = z^{1 - \frac{\alpha}{2}} \exp(\omega z),
\end{equation*}
with principal branch cut selected for the power $z^{1-\frac{\alpha}{2}}$,
is holomorphic with an integrable singularity at the origin, so
\begin{align*}
\int_0^\infty r^{1- \frac{\alpha}{2}} \exp(\omega r) \,\diff r &=
-\overline{\omega} ( - \overline{\omega})^{1-\frac{\alpha}{2}}
\int_0^\infty t^{1-\frac{\alpha}{2}} \exp(-t) \,\diff t, \\
&=
-\overline{\omega} ( -
\overline{\omega})^{1-\frac{\alpha}{2}}\Gamma\bigg(2 -
\frac{\alpha}{2}\bigg),
\end{align*}
note that since $\omega$ is on the unit circle, away from
the principal branch cut, and $\frac{\alpha}{2}-1 \in (0,1)$,
\[
\big(-\overline{\omega}\big)^{1-\frac{\alpha}{2}} = 
\big(-\omega\big)^{\frac{\alpha}{2}-1}, 
\]
which implies
\begin{equation*}
\sigma^2 |\sigma|^{\frac{\alpha}{2} - 2} ( - \overline{\omega}) \big(
-\overline{\omega}\big)^{1 - \frac{\alpha}{2}} = - \sigma \big( -
\sigma\big)^{\frac{\alpha}{2} - 1} = (-\sigma)^{\frac{\alpha}{2}}
\end{equation*}
where in the last equality, we used that $\Re(\sigma) < 0$ ,
$\frac{\alpha}{2} -1 \in (0,1)$ (so that we do not cross branch cuts
while combining powers). To obtain the formula for the constant note
\[
\frac{\Gamma\big(\frac{\alpha}{2} - 1 \big)\Gamma(2) 
\Gamma\big(2-\frac{\alpha}{2}\big)}{\Gamma\big(1+\frac{\alpha}{2}\big)} 
= \frac{\Gamma\big(2-\frac{\alpha}{2}\big)}{\frac{\alpha}{2}\big( 
\frac{\alpha}{2} - 1\big)} = k_\alpha.\qedhere
\]
\end{proof}

\begin{lemma}
\label{lem:resIntegral}
Let $\alpha \in (2,4)$ and let $\sigma_1$, $\sigma_2 \in \C \backslash
\R_-$. Then
\begin{equation*}
\int_0^\infty
\frac{r^{\frac{\alpha}{2}-1}}{(r-\sigma_1)(r-\sigma_2)}\,\diff r
=\frac{\pi}{\sin\big(\frac{\pi \alpha}{2}\big)(\sigma_1 - \sigma_2)}
\bigg( (-\sigma_1)^{\frac{\alpha}{2} - 1} - (-
\sigma_2)^{\frac{\alpha}{2} - 1}\bigg)
\end{equation*}
where the principal branch cut is taken for the power function.
When $\sigma_1 = \sigma_2 = \sigma \in \C\backslash\R_-$, then
the integral equals
\begin{equation*}
  \int_0^\infty \frac{r^{\frac{\alpha}{2} - 1}}{(r-\sigma)^2} \,\diff
  r = \frac{\pi(\frac{\alpha}{2} - 1)(-\sigma)^{\frac{\alpha}{2}
      -2}}{\sin\big(\frac{\pi\alpha}{2}\big)}.
\end{equation*}
\end{lemma}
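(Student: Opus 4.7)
My plan is to evaluate both integrals by a standard keyhole contour argument exploiting the branch cut of the principal-branch power $(-z)^{\alpha/2-1}$, which sits precisely on the positive real axis. Define
\begin{equation*}
F(z) := \frac{(-z)^{\frac{\alpha}{2}-1}}{(z-\sigma_1)(z-\sigma_2)}.
\end{equation*}
Since $-z \in \R_-$ exactly when $z\in(0,\infty)$, the function $F$ is analytic on $\C\setminus\big((0,\infty)\cup\{\sigma_1,\sigma_2\}\big)$, and the hypothesis on $\sigma_j$ keeps the poles off the cut.

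I then apply the residue theorem on the counterclockwise keyhole contour $\Gamma_{\epsilon,R}$ consisting of an upper edge just above $[\epsilon,R]$, the large circle $|z|=R$ traversed counterclockwise, a lower edge just below the segment traversed inward from $R$ to $\epsilon$, and a small circle $|z|=\epsilon$ traversed clockwise. For $R$ sufficiently large and $\epsilon$ sufficiently small both poles lie inside $\Gamma_{\epsilon,R}$. The two circular contributions vanish in the limit: on $|z|=R$ one has $|F(z)|=O(R^{\alpha/2-3})$, so the arc integral is $O(R^{\alpha/2-2})\to 0$ because $\alpha<4$; on $|z|=\epsilon$ one has $|F(z)|=O(\epsilon^{\alpha/2-1})$, so the arc integral is $O(\epsilon^{\alpha/2})\to 0$ because $\alpha>0$. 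This is precisely where the hypothesis $\alpha\in(2,4)$ is used.

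On the upper (respectively lower) edge the principal branch gives $(-r \mp i0)^{\alpha/2-1} = r^{\alpha/2-1} e^{\mp i\pi(\alpha/2-1)}$, so after letting $\epsilon\downarrow 0$ and $R\uparrow\infty$ the sum of the upper and lower edge contributions is
\begin{equation*}
\Big[e^{-i\pi(\frac{\alpha}{2}-1)} - e^{i\pi(\frac{\alpha}{2}-1)}\Big]\int_0^\infty \frac{r^{\frac{\alpha}{2}-1}}{(r-\sigma_1)(r-\sigma_2)}\,\diff r = 2i\sin\Big(\frac{\pi\alpha}{2}\Big)\int_0^\infty \frac{r^{\frac{\alpha}{2}-1}}{(r-\sigma_1)(r-\sigma_2)}\,\diff r,
\end{equation*}
using $\sin(\pi(\alpha/2-1)) = -\sin(\pi\alpha/2)$. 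By the residue theorem this equals $2\pi i$ times the sum of residues, and the simple-pole evaluations $\mathrm{Res}_{z=\sigma_j}F = (-\sigma_j)^{\alpha/2-1}/(\sigma_j - \sigma_{3-j})$ give the first formula after a partial-fraction rearrangement.

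For the coincident case $\sigma_1=\sigma_2=\sigma$ I will simply pass to the limit $\sigma_2\to\sigma_1$ in the formula just proved: the bracketed difference quotient converges to $\tfrac{\diff}{\diff\sigma}(-\sigma)^{\alpha/2-1}$, yielding the stated double-pole expression (equivalently, one may re-run the keyhole argument with $(z-\sigma)^2$ in the denominator and read off the double-pole residue directly). The only genuine care required anywhere in the argument is bookkeeping the phase $e^{\mp i\pi(\alpha/2-1)}$ on the two sides of the cut — that phase jump is what converts the residue sum into the factor $\sin(\pi\alpha/2)$ in the denominator — so there is no substantial obstacle beyond fixing branch conventions.
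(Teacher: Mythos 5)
Your proof is correct and uses essentially the same argument as the paper: a keyhole contour exploiting the phase jump of the fractional power across its branch cut, followed by the residue theorem, with the double-pole case obtained by the limit $\sigma_2 \to \sigma_1$. The only cosmetic difference is that you keyhole around the positive real axis using $(-z)^{\alpha/2-1}/\big((z-\sigma_1)(z-\sigma_2)\big)$, whereas the paper applies the reflection $z \mapsto -z$ and keyholes around the negative reals with $z^{\alpha/2-1}/\big((z+\sigma_1)(z+\sigma_2)\big)$; the phase bookkeeping and the residue sum are identical.
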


\begin{proof}
Let
\begin{equation*}
  f(z): = \frac{z^{\frac{\alpha}{2}-1}}{(z + \sigma_1)(z + \sigma_2)},
\end{equation*}
if we integrate this function on the counterclockwise keyhole contour
that avoids the branch cut on the negative reals then the residue
theorem yields
\begin{multline*}
\bigg(\exp\bigg[i \pi \bigg( \frac{\alpha}{2} - 1\bigg)\bigg] -
\exp\bigg[-i \pi \bigg(\frac{\alpha}{2} - 1\bigg)\bigg]\bigg)
\int_0^\infty \frac{r^{\frac{\alpha}{2}-1}}{(r -
  \sigma_1)(r-\sigma_2)}\,\diff r \\ =\frac{2 \pi i}{\sigma_2 -
  \sigma_1} \Big((-\sigma_1)^{\frac{\alpha}{2} - 1} -
(-\sigma_2)^{\frac{\alpha}{2} -1 }\Big)
\end{multline*}
which when rearranged yields
\begin{equation*}
  \int_0^\infty \frac{r^{\frac{\alpha}{2}-1}}{(r -
    \sigma_1)(r-\sigma_2)}\,\diff r=  \frac{\pi}{\sin\big(\frac{\pi
        \alpha}{2}\big)(\sigma_1 - \sigma_2)}
  \Big((-\sigma_1)^{\frac{\alpha}{2} - 1} -
  (-\sigma_2)^{\frac{\alpha}{2} -1 }\Big)
\end{equation*}
as required. Setting $\sigma_2 = \sigma$ and letting $\sigma_1 \to \sigma_2$
yields the second integral.
\end{proof}

\bibliographystyle{alpha}

\end{document}